\newtheorem{theorem}{Theorem}[section]
\newtheorem{lemma}[theorem]{Lemma}
\newtheorem{proposition}[theorem]{Proposition}
\newtheorem*{definition*}{Definition}
\newtheorem*{acknowledgement}{Acknowledgement}
\newcommand{\cs}{\mathcal S}
\newcommand{\cm}{\mathcal M}
\newcommand{\cp}{\mathcal F}
\newcommand{\cq}{\mathcal Q}
\newcommand{\ct}{\mathcal T}
\newcommand{\lp}{L^{p}}
\newcommand{\nf}{\infty}
\newcommand{\ZR}{\mathbb{R}}
\newcommand{\ZZ}{\mathbb{Z}}
\newcommand{\ZN}{\mathbb{N}}
\newcommand{\ZC}{\mathbb{C}}
\newcommand{\fm}{{\mathfrak M}}
\newcommand{\hichi}{\raisebox{0.7ex}{\(\chi\)}}
\begin{document}

\title{On bilinear Hilbert transform along two polynomials}
\author{Dong Dong}
\address{Center for Scientific Computation and Mathematical Modeling\\University of Maryland\\College Park, MD 20742, USA}
\email{ddong12@cscamm.umd.edu}
%\date{\today}

\begin{abstract}
We prove that the bilinear Hilbert transform along two polynomials $B_{P,Q}(f,g)(x)=\int_{\ZR}f(x-P(t))g(x-Q(t))\frac{dt}{t}$ is bounded from $L^p \times L^q$ to $L^r$ for a large range of $(p,q,r)$, as long as the polynomials $P$ and $Q$ have distinct leading and trailing degrees. The same boundedness property holds for the corresponding bilinear maximal function $\cm_{P,Q}(f,g)(x)=\sup_{\epsilon>0}\frac{1}{2\epsilon}\int_{-\epsilon}^{\epsilon} |f(x-P(t))g(x-Q(t))|dt$.
\end{abstract}
\let\thefootnote\relax\footnote{\emph{Key words and phrases}: Hilbert transform along curves, bilinear Hilbert transform}
\let\thefootnote\relax\footnote{\emph{2010 Mathematics Subject Classification}: 42B20, 47B38}

\maketitle
\section{Introduction}
\setcounter{equation}0
The Hilbert transform along a curve $\gamma: \ZR\to \ZR^n$ is defined by
\begin{equation} \label{HT}
H_{\gamma}(f)(x):=\int_{\ZR} f(x-\gamma(t))\frac{dt}{t},\,\, f\in\cs(\ZR^n).
\end{equation}
Here $\cs(\ZR^n)$, $n\in \ZN$, denotes the space of Schwartz functions on $\ZR^n$. Stein (\cite{S70}) raised the question that under what condition on $\gamma$ is $H_{\gamma}$ bounded from $\lp(\ZR^n)$ to itself for some $p$. Among many curves, a simple but important two dimensional example is the curve $\gamma_{a,b}(t)=(t^a,t^b)$, where $a,b$ are distinct natural numbers. For this particular type of curve, \eqref{HT} becomes
\begin{equation}
H_{\gamma_{a,b}}(f)(x_1,x_2)=\int_{\ZR}f(x_1-t^a,x_2-t^b)\frac{dt}{t},\,\, f\in\cs(\ZR^2).
\end{equation}
The $L^2$-boundedness of $H_{\gamma_{a,b}}$ was first proved by Fabes \cite{F} and Stein and Wainger \cite{SW70}, using different methods. Nagel et.al. \cite{NRW1,NRW2} obtained the $\lp$-boundedness for $p\in (1,\nf)$. It turns out $\gamma_{a,b}$ is the model curve for the very general ``well-curved'' curves (\cite{SW}). 

The purpose of this article is to investigate a bilinear analogue of $H_{\gamma_{a,b}}$. Given two polynomials $P$ and $Q$ on $\ZR$, define the bilinear Hilbert transform along $P,Q$ by
\begin{equation} \label{BPQ}
B_{P,Q}(f,g)(x):=\int_{\ZR}f(x-P(t))g(x-Q(t))\frac{dt}{t},\,\,f,g\in\cs(\ZR).
\end{equation}
In the above definition, instead of just $t^a$ and $t^b$, two arbitrary polynomials are involved, which provides a more general framework. A natural question is that under what condition on $P$ and $Q$ does $B_{P,Q}$ satisfy any $\lp$ estimates. For this problem, we can assume without loss of generality that both $P$ and $Q$ contain no constant term. There are already some positive results in the literature. For example, when $P$ and $Q$ are distinct linear polynomials, $B_{P,Q}$ is in fact the famous bilinear Hilbert transform, whose boundedness was proved by Lacey and Thiele in a pair of breakthrough papers (\cite{LT97,LT99}). Xiaochun Li \cite{LAP} first studied the case $P(t)=t$, $Q(t)=t^d$, $d\in\ZN$, and showed that $B_{P,Q}$ is bounded from $L^2\times L^2$ to $L^1$ (see also \cite{GX,Lie} for some generalizations). Together with Lechao Xiao, Li later (\cite{LX}) obtained the $\lp$ estimates in full range when $P(t)=t$ and $Q$ is any polynomial without linear term. Following the approach in \cite{LAP,LX}, we obtain the theorems below which can be viewed as an extension of Li-Xiao's result to a larger range of pairs of polynomials.
\begin{definition*}
The \textbf{correlation degree} of any two polynomials $P$ and $Q$ is defined as the smallest natural number $d$ such that any non-zero real root of $P'(x)-Q'(x)$ has multiplicity at most $d$.
\end{definition*}

\begin{theorem} \label{thm}
Given two polynomials $P$ and $Q$ without constant terms, we can always write them as
\begin{align}
P(t)=a_{d_1}t^{d_1}+a_{d_1-1}t^{d_1-1}+\dots+a_{e_1}t^{e_1}, 1\le e_1\le d_1, a_{d_1}a_{e_1}\neq 0 \label{poly P}\\
Q(t)=b_{d_2}t^{d_2}+b_{d_2-1}t^{d_2-1}+\dots+b_{e_2}t^{e_2}, 1\le e_2\le d_2, b_{d_2}b_{e_2}\neq 0. \label{poly Q}
\end{align}
Assume $d_1\neq d_2$ and $e_1\neq e_2$. Then there is a constant $C_{P,Q}$ depending on $P$ and $Q$ (and of course $p,q,r$) such that $B_{P,Q}$ defined in \eqref{BPQ} satisfies $\|B_{P,Q}(f,g)\|_r\le C_{P,Q}\|f\|_p\|g\|_q$ for any $f,g\in\cs(\ZR)$, whenever $p,q\in(1,\nf)$, $\frac{1}{r}=\frac{1}{p}+\frac{1}{q}$, $r>\frac{d}{d+1}$. Here $d$ is the correlation degree of $P$ and $Q$. 
\end{theorem}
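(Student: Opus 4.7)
The plan is to follow the Li-Xiao framework \cite{LX}, extended to handle general polynomial pairs through a perturbation argument at each dyadic scale. The first step is a dyadic decomposition in $t$: pick a smooth bump $\rho$ with $\mathrm{supp}\,\rho\subset\{1/2\le|t|\le 2\}$ and $\sum_{k\in\ZZ}\rho(t/2^k)=1$ for $t\ne 0$, and set
\[
B_k(f,g)(x)=\int_{\ZR} f(x-P(t))\,g(x-Q(t))\,\rho(t/2^k)\,\frac{dt}{t},
\]
so that $B_{P,Q}=\sum_k B_k$. The sum naturally splits into three regimes: $k\to-\infty$, $k\to+\infty$, and a bounded middle range that is handled term by term.

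For $k\ll 0$, I rescale $t\mapsto 2^k t$. Then $P(2^k t)=a_{e_1}2^{ke_1}t^{e_1}+\text{lower-order in }2^k$, and similarly for $Q$, so after normalizing the leading coefficients $B_k$ is a small perturbation of the bilinear operator associated to the trailing monomial pair $(t^{e_1},t^{e_2})$. Since $e_1\ne e_2$, this falls within the scope of Li-Xiao's analysis. The regime $k\gg 0$ is symmetric, with $B_k$ a perturbation of the bilinear operator along $(t^{d_1},t^{d_2})$, and $d_1\ne d_2$ again puts us in the Li-Xiao setting. The lower-order perturbative contributions will be absorbed either by slight loss of constants (permitted, since $C_{P,Q}$ may depend on $P,Q$) or by Taylor-expanding and iterating.

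Each single-scale operator $B_k$ is analyzed on the Fourier side via the multiplier
\[
m_k(\xi,\eta)=\int_{\ZR} e^{-i(\xi P(t)+\eta Q(t))}\rho(t/2^k)\,\frac{dt}{t}.
\]
Decomposing $f$ and $g$ with Littlewood-Paley projections $\Delta_{j_1}f,\Delta_{j_2}g$, I would dispose of non-resonant interactions where $\xi P'(t)+\eta Q'(t)$ does not vanish on the support of $\rho(\cdot/2^k)$, using repeated integration by parts to obtain rapid decay in $|j_1|,|j_2|,|k|$. The resonant case, when $\xi P'+\eta Q'$ has a real zero, is the delicate one; the most degenerate situation is the diagonal $\xi+\eta=0$, on which the phase derivative reduces to $\xi(P'-Q')$ and the multiplicity of critical points is bounded exactly by the correlation degree $d$. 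Van der Corput then yields single-scale decay of order $2^{-k/(d+1)}$.

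The main obstacle is controlling the resonant contribution: because the phase has a critical point of multiplicity up to $d$, neither plain stationary-phase nor standard paraproduct arguments suffice, and one must implement a genuine time-frequency decomposition with wave-packet/$\sigma$-uniformity techniques in the style of \cite{LAP,LX}, now adapted to two polynomials rather than one. Once the single-scale bound with decay $2^{-k/(d+1)}$ is in hand, the threshold $r>d/(d+1)$ emerges by interpolating this decay against a trivial $L^\infty$ bound and summing in $k$: summability of $\sum_k 2^{-k\theta}$ with $\theta$ proportional to $1-d/(r(d+1))$ is exactly the condition $r(d+1)>d$. The hypothesis $d_1\ne d_2$, $e_1\ne e_2$ is essential throughout, guaranteeing that neither end of the dyadic scale degenerates into a problematic parallel-monomial configuration.
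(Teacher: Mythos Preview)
Your overall architecture---dyadic decomposition in $t$, a bounded middle range treated scale by scale, and tails $k\to\pm\infty$ handled as perturbations of the trailing/leading monomial pairs via the Li--Xiao machinery---matches the paper. But you have misidentified the mechanism that produces the threshold $r>\frac{d}{d+1}$, and this is a genuine gap.

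In the paper the restriction $r>\frac{d}{d+1}$ comes \emph{only} from the middle range (Lemma~\ref{lemma on single scale}), i.e.\ from bounding a single $T_j$ when the support of $\rho(2^j\cdot)$ contains a nonzero root $t_0$ of $P'-Q'$. The argument is purely physical-space: one changes variables $(x,t)\mapsto(x-P(t),x-Q(t))$, whose Jacobian $Q'(t)-P'(t)$ vanishes to order at most $d$ at $t_0$; a secondary dyadic decomposition in $|t-t_0|$ gives an $L^1\times L^1\to L^1$ bound with decay and an $L^1\times L^1\to L^{1/2}$ bound with controlled growth, and interpolation yields exactly the range $r>\frac{d}{d+1}$. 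No Fourier analysis, no van der Corput on the multiplier, and no time-frequency tools are used here.

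Conversely, the tail sums $\sum_{|j|>N}T_j$ are bounded for the \emph{full} range $r>\frac12$ (Theorem~\ref{thm: large j}); the correlation degree does not enter at all. The roots of $P'-Q'$ are fixed nonzero reals, so for $|j|$ large they lie outside $\mathrm{supp}\,\rho(2^j\cdot)$ and never cause the degeneracy you describe. The time-frequency/$\sigma$-uniformity machinery is invoked to obtain decay in the \emph{frequency} parameter $m$ (Propositions~\ref{prop 221} and~\ref{prop pqr}), not in the scale $k$, and the sum over $j$ is then controlled by Littlewood--Paley orthogonality. Your proposed ``single-scale decay of order $2^{-k/(d+1)}$'' and the interpolation-and-sum-in-$k$ derivation of the threshold do not correspond to what actually happens; as written, your plan gives no argument for the crucial single-scale bound at the finitely many bad scales where $t_0\in\mathrm{supp}\,\rho(2^j\cdot)$.
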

Remarks.
1. In the expressions \eqref{poly P} and \eqref{poly Q}, we can call $d_1$ and $d_2$ the \textbf{leading degrees}, as they are the degrees of the leading terms. Similarly, $e_1$ and $e_2$ may be called \textbf{trailing degrees} if we name $a_{e_1}t^{e_1}$ and $b_{e_2}t^{e_2}$ as \textbf{trailing terms}. So the condition imposed on $P$ and $Q$ in the theorem can be phrased in words as ``$P$ and $Q$ have distinct leading and trailing degrees''.

2. We conjecture that the constant $C_{P,Q}$ in the theorem may be chosen to be independent of the coefficients of the polynomials. This seems to be a hard and technical problem, whose solution may involve the ideas in the proof of uniform estimate for the bilinear Hilbert transform (\cite{GL,LRev,Thiele02}).

3. For any fixed natural number $d$, there exist polynomials $P$ and $Q$ with correlation degree $d$ such that $B_{P,Q}$ is unbounded whenever $r<\frac{d}{d+1}$ (see Section 3.2 in \cite{LX} for an example). In this sense the lower bound for $r$ given in Theorem \ref{thm} is sharp up to the endpoint. However, if we fix the polynomials $P$ and $Q$, the lower bound of $r$ in Theorem \ref{thm} may not be the best. For instance, let $P(t)=t^6$ and $Q(t)=3t^4-3t^2$. Then $B_{P,Q}$ is the zero operator, which is trivially bounded for $r>\frac{1}{2}$. But the correlation degree of $P$ and $Q$ is 2. It is interesting to find a way to determine the lowest $r$ for any given $P$ and $Q$. This task requires improvement on Lemma \ref{lemma on single scale} in Section \ref{section: reduction} (also see \cite{Dfull} for a recent partial progress on this problem).

4. Some techniques in the study of $B_{P,Q}$ can be used to study discrete analogue of $B_{P,Q}$: see \cite{DCRM,Dthesis,DLS,DM} for some examples.

As a byproduct of the proof of Theorem \ref{thm}, we obtain the same estimate for the bilinear maximal function $\cm_{P,Q}$ defined by
\begin{equation}
\cm_{P,Q}(f,g)(x):=\sup_{\epsilon>0}\frac{1}{2\epsilon}\int_{-\epsilon}^{\epsilon} |f(x-P(t))g(x-Q(t))|\, dt.
\end{equation}
\begin{theorem} \label{thm maximal}
Let $P,Q$ and $p,q,r$ satisfy the conditions stated in Theorem \ref{thm}. Then $\cm_{P,Q}$ is bounded from $L^p \times L^q$ to $L^r$.
\end{theorem}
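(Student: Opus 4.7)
The plan is to reduce Theorem \ref{thm maximal} to the single-scale estimate that already drives the proof of Theorem \ref{thm}. First, I would dyadically decompose the maximal average. Fix a nonnegative even smooth bump $\phi$ supported in $\{1/2\le|t|\le 2\}$ with $\phi(t)\ge 1$ on $\{1\le|t|\le 2\}$, and set
\begin{equation*}
\wt M_k(f,g)(x):=2^{-k}\int_{\ZR}\phi(t/2^k)\bigl|f(x-P(t))g(x-Q(t))\bigr|\,dt.
\end{equation*}
For any $\epsilon>0$, picking $k$ with $2^{k-1}\le\epsilon<2^k$ and decomposing the integral dyadically over $|t|\sim 2^j$ with $j<k$ yields $\cm_{P,Q}(f,g)(x)\lesssim\sup_{k\in\ZZ}\wt M_k(f,g)(x)$, so it suffices to bound this supremum in $L^r$.

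Second, I would inherit from the proof of Theorem \ref{thm} the single-scale estimate
\begin{equation*}
\|\wt M_k(f,g)\|_r\le C\,2^{-\delta|k|}\|f\|_p\|g\|_q
\end{equation*}
for some $\delta=\delta(P,Q,p,q,r)>0$. Theorem \ref{thm} is itself established by a dyadic splitting $B_{P,Q}=\sum_k B_k$ reduced (via Lemma \ref{lemma on single scale}) to the analogous estimate for $B_k$; two-sided geometric decay in $|k|$ is produced by the hypotheses $d_1\neq d_2$ (controlling $k\to+\infty$ through the leading terms) and $e_1\neq e_2$ (controlling $k\to-\infty$ through the trailing terms). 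Because $|\phi(t/2^k)/t|\le 2\cdot 2^{-k}\phi(t/2^k)$ on the support of $\phi(\cdot/2^k)$, and because the mechanism of the single-scale lemma -- TT$^*$/stationary phase driven by the phase $\xi P(t)+\eta Q(t)$ for the low-frequency piece, and Lacey--Thiele time-frequency analysis driven by the smooth bump profile $\phi$ for the high-frequency piece -- rests on size and non-degeneracy estimates rather than on cancellation inherent in the $1/t$ kernel, the very same argument, carried out with $|f|,|g|$ in place of $f,g$, produces the desired bound for $\wt M_k$.

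Third, I would close the argument by the elementary inequality $\sup_k a_k\le(\sum_k a_k^r)^{1/r}$, valid for nonnegative $a_k$ and every $r>0$. Taking $L^r$ norms and interchanging sum and integral,
\begin{equation*}
\|\cm_{P,Q}(f,g)\|_r^r\lesssim\sum_{k\in\ZZ}\|\wt M_k(f,g)\|_r^r\le C\|f\|_p^r\|g\|_q^r\sum_{k\in\ZZ}2^{-\delta r|k|}\lesssim\|f\|_p^r\|g\|_q^r,
\end{equation*}
which is the desired estimate.

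The main obstacle is the second step: one must confirm that every ingredient of the single-scale argument for Theorem \ref{thm} survives replacement of $1/t$ by its absolute value $2^{-k}\phi(t/2^k)$. The low-frequency oscillatory-integral piece is driven by the non-degeneracy of the phase from $P,Q$, not by the sign of the kernel, and should transfer directly; the high-frequency time-frequency piece requires verifying that the tree/size estimates depend on the kernel only through the smooth cutoff $\phi$. If that dependence is indeed through $\phi$, as is typical in the Li--Xiao framework, then the argument handles $\wt M_k$ either by direct substitution or by the (in fact easier) positive-kernel variant, still powered by the same phase analysis stemming from $d_1\neq d_2$ and $e_1\neq e_2$.
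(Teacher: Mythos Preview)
The second step contains a genuine gap: the paper never proves, and in general one cannot expect, a single-scale estimate $\|\wt M_k(f,g)\|_r\lesssim 2^{-\delta|k|}\|f\|_p\|g\|_q$. Lemma \ref{lemma on single scale} gives only \emph{uniform} boundedness of each $T_j$, with no decay in $j$; for pure monomials $P(t)=t^a$, $Q(t)=t^b$ there is no mechanism producing such decay, and the proof of Theorem \ref{thm} does not proceed that way. The hypotheses $d_1\neq d_2$, $e_1\neq e_2$ are not used to make $T_j$ small for large $|j|$; they allow one, for $|j|>N$, to treat $P,Q$ as perturbations of their trailing (resp.\ leading) monomials, after which each $T_j$ is further decomposed as $\sum_{m,n}T_{j,m,n}$. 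The paraproduct pieces are summed in $j$ via Littlewood--Paley orthogonality, and the diagonal pieces $T_{j,m}$ are summed over $j$ at fixed $m$ (Propositions \ref{prop 221} and \ref{prop pqr}) with the resulting operator carrying geometric decay in the \emph{frequency} parameter $m$ (Theorem \ref{thm T_jm}), not in the scale $j$. Your $\ell^r$-summation over $k$ therefore has nothing to sum.

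The paper's actual proof of Theorem \ref{thm maximal} reflects this structure. One reduces to $T^*(f,g)=\sup_{j}T_j(|f|,|g|)$, disposes of $|j|\le N$ by Lemma \ref{lemma on single scale}, and for $j>N$ again writes $T_j=\sum_{m,n}T_{j,m,n}$. The key new ingredient for the supremum is a \emph{pointwise} bound on the easy region $E=\{|m-n|\gtrsim 1\}\cup\{\max(m,n)\le 0\}$: one has $\sup_{j>N}\bigl|\sum_{(m,n)\in E}T_{j,m,n}(f,g)\bigr|\lesssim \fm f(x)\,\fm g(x)$, which is what absorbs the supremum. For the remaining diagonal $m=n>0$ one bounds $\sup_{j>N}|T_{j,m}|\le\sum_{j>N}|T_{j,m}|$ and invokes the $2^{-\epsilon m}$ decay of Theorem \ref{thm T_jm} to sum in $m$. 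In short, the maximal function is controlled by combining a pointwise Hardy--Littlewood domination for the low-frequency part with summable decay in $m$ for the high-frequency part; there is no shortcut through decay in the dyadic scale.
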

Just like the relationship between $B_{P,Q}$ and $H_{\gamma_{a,b}}$, $\cm_{P,Q}$ can be viewed as a bilinear analogue of the the maximal function associated with $H_{\gamma_{a,b}}$,
$$
M_{\gamma_{a,b}}(f)(x_1,x_2):=\sup_{h>0}\frac{1}{2h}\int_{-h}^h|f(x_1-t^a,x_2-t^b)|\,dt, \,\,f\in \cs(\ZR^2).
$$
The $L^p$-boundedness of $M_{\gamma_{a,b}}$ was proved in \cite{NRW3} (see \cite{S76,SW76,SW} for further developments on more general curves), and Theorem \ref{thm maximal} is the parallel result in the bilinear setting.

The rest of the paper is organized as follows. In section \ref{section: reduction}, we make careful decompositions on our operator, and after throwing away the paraproduct part, reduce Theorem \ref{thm} to two estimates (Proposition \ref{prop 221} and Proposition \ref{prop pqr}): a scale-type decay estimate when $p=q=2$, and a moderate blow-up estimate for general $p$ and $q$. The decay estimate will be proved in section \ref{section: TT*} and \ref{section: uniformity}, using TT* method and $\sigma$-uniformity method. In the last section, we show how to obtain the moderate blow-up estimate by adapting methods from \cite{LX}, and prove Theorem \ref{thm maximal}.

Throughout the paper we use $C$ to denote a positive constant (which may depend on $P$ and $Q$) whose value is allowed to change from line to line. $A\lesssim B$ means $A\le CB$. $A\simeq B$ is short for $A\lesssim B$ and $B\lesssim A$. We use $A\sim B$ to denote the statement that $B$ is the leading term (principal contribution) of $A$ after using Taylor expansion or stationary phase method. $\hichi_E$ will be used to denote the indicator function of a set $E$.

\section{decomposition and reduction} \label{section: reduction} 
\setcounter{equation}0
Pick an odd function $\rho\in\cs(\ZR)$ supported in the set $\{x:|x|\in(\frac{1}{2},2)\}$ with the property that $t^{-1}=\sum_{j\in\ZZ} 2^j\rho(2^jt)$ for any $t\neq 0$. Then we can write $B_{P,Q}(f,g)(x)=\sum_{j\in\ZZ}T_j(f,g)(x)$, where
\begin{align}
T_j(f,g)(x)&:=\int f(x-P(t))g(x-Q(t))2^j\rho(2^jt)\,dt \label{def of Tj}\\
&=\iint \hat{f}(\xi)\hat{g}(\eta)e^{2\pi i(\xi+\eta)x}m_j(\xi,\eta)\,d\xi d\eta, \notag
\end{align}
and
\begin{equation} \label{def of m_j}
m_j(\xi,\eta):=\int 2^j\rho(2^jt)e^{-2\pi i(\xi P(t)+\eta Q(t))}\,dt.
\end{equation}
We first prove that each $T_j$ is bounded.
\begin{lemma} \label{lemma on single scale}
Let $P$ and $Q$ be two arbitrary polynomials. Then each $T_j$ is bounded from $L^p \times L^q$ to $L^r$, whenever $p,q\in(1,\nf)$, $\frac{1}{r}=\frac{1}{p}+\frac{1}{q}$, $r>\frac{d}{d+1}$, where $d$ is the correlation degree of $P$ and $Q$.
\end{lemma}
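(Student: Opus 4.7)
The plan is to first rescale via $t = 2^{-j}s$ so that
\[T_j(f,g)(x) = \int_{|s|\in(1/2,2)} f(x-P(2^{-j}s))\, g(x-Q(2^{-j}s))\,\rho(s)\,ds.\]
For $r \ge 1$, Minkowski's inequality in $s$ and the bilinear Hölder inequality in $x$ yield $\|T_j(f,g)\|_r \le \|\rho\|_1 \|f\|_p \|g\|_q$ immediately; the substantive regime is the quasi-Banach range $d/(d+1) < r < 1$, where $L^r$ is only a quasi-norm.

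For that range I would pass to the trilinear form $\Lambda_j(f,g,h) = \int T_j(f,g)(x)\,h(x)\,dx$ and perform two changes of variables. Translating $x\mapsto x+Q(2^{-j}s)$ removes one shift; then, on each of the finitely many maximal subintervals $I_k$ of $\{1/2<|s|<2\}$ on which $R_j(s) := Q(2^{-j}s) - P(2^{-j}s)$ is monotone, substituting $u = R_j(s)$ produces
\[\Lambda_j(f,g,h) = \sum_k \iint f(y+u)\, g(y)\, h\bigl(y + \chi_{j,k}(u)\bigr)\, W_{j,k}(u)\,du\,dy,\]
with $\chi_{j,k}(u) = Q(2^{-j}R_j^{-1}(u))$ smooth and $W_{j,k}(u) = \rho(R_j^{-1}(u))/|R_j'(R_j^{-1}(u))|$ potentially singular exactly at the rescaled zeros of $P'-Q'$.

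The correlation-degree hypothesis forces each such zero to have multiplicity at most $d$, so the sublevel-set estimate $|\{s\in I_k : |R_j'(s)|\le\lambda\}|\lesssim\lambda^{1/d}$ holds. I would dyadically split $W_{j,k}=\sum_{l\ge 0}W_{j,k}^{(l)}$ by the dyadic size $|R_j'|\sim 2^{-l}$, obtaining $\|W_{j,k}^{(l)}\|_\infty \lesssim 2^l$ supported on a set of $u$-measure $\lesssim 2^{-l(1+1/d)}$. For each dyadic piece, a restricted-weak-type analysis on $f=\chi_A,g=\chi_B$ interpolates (via real interpolation for bilinear operators) between the trivial $L^\infty\times L^\infty\to L^\infty$ endpoint, which costs $\|W_{j,k}^{(l)}\|_1\lesssim 2^{-l/d}$, and a strong single-scale endpoint controlled by $\|W_{j,k}^{(l)}\|_\infty\lesssim 2^l$; the geometric sum over $l$ then converges in $L^r$ precisely when $r>d/(d+1)$.

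The main obstacle is the $r<1$ range, where Minkowski's inequality and trilinear duality both fail and the Jacobian weight $W_{j,k}$ carries a genuine singularity. Only the correlation-degree bound converts the qualitative nondegeneracy of $P'-Q'$ into the quantitative sublevel-set estimate, and its exponent $1/d$ is exactly what pins the threshold at $r>d/(d+1)$.
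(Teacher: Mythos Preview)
Your overall plan---dyadically decompose near the zeros of $P'-Q'$, use the Jacobian/sublevel estimate coming from the correlation degree, and interpolate---is the same skeleton the paper uses. But the execution has a real gap in exactly the place you flag as the ``main obstacle.''

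First, passing to the trilinear form $\Lambda_j(f,g,h)$ buys you nothing once $r<1$: there is no $h\in L^{r'}$ to test against, and you say as much in your last paragraph, yet your argument is written entirely through $\Lambda_j$. The paper avoids duality completely and works directly with $\|T_j(f,g)\|_r$.

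Second, your two interpolation endpoints do not close. The bound $\|W^{(l)}\|_1\lesssim 2^{-l/d}$ is the $L^\infty\times L^\infty\to L^\infty$ (equivalently, any $r\ge 1$ via Minkowski) estimate; the bound $\|W^{(l)}\|_\infty\lesssim 2^{l}$ is the $L^1\times L^1\to L^1$ estimate coming from the change of variables $(x,s)\mapsto(x-P,x-Q)$ with Jacobian $\sim 2^{-l}$. Interpolating those two never leaves the region $r\ge 1$. To reach $r<1$ you need an honest $L^1\times L^1\to L^{1/2}$ endpoint, and for that the paper performs a step you omit: it first \emph{localizes $x$} to an interval $I_N$ of length comparable to the $t$-scale (possible because $|P(t)-P(t_0)|,|Q(t)-Q(t_0)|$ are that small), and only then applies Cauchy--Schwarz in $x$,
\[
\int_{I_N}|T_N(f,g)|^{1/2}\,dx\;\le\;|I_N|^{1/2}\,\|T_N(f,g)\|_1^{1/2},
\]
followed by the same Jacobian change of variables. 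This produces $\|T_N(f,g)\|_{1/2}\lesssim 2^{(d-1)j}\|f\|_1\|g\|_1$ (in the paper's indexing), and interpolating \emph{that} against the $r=1$ bound is what gives exactly the threshold $r>d/(d+1)$. With your stated constants $2^{-l/d}$ and $2^{l}$ the arithmetic yields the wrong threshold.

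So the fix is concrete: drop the trilinear reduction, localize $x$ to intervals of the dyadic $t$-scale, get the $L^{1/2}$ endpoint by Cauchy--Schwarz plus your Jacobian estimate, and then interpolate.
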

\begin{proof}
We only consider the operator $T_0$, as the other cases are similar. The idea of the proof is based on Lemma 9.1 in \cite{LAP}. Note that when $r\ge 1$ the boundedness of $T_0$ follows from Minkowski inequality. So we assume now $r<1$. Since $|t|\simeq 1$, we can restrict $x$ and the support of $f$ and $g$ to a bounded interval $I_{P,Q}$. When the Jacobian $Q'(t)-P'(t)\neq 0$ for all $t$ in the support of $\rho$, $T_0$ is bounded from $L^1\times L^1$ to $L^1$ by changing variables $u=x-P(t)$ and $v=x-Q(t)$. Thus $T_0$ is bounded from $L^1\times L^1$ to $L^{\frac{1}{2}}$ by Cauchy-Schwarz inequality. 

Now we focus on the case that there is a root of $Q'(t)-P'(t)$ lying in the support of $\rho$. Let $t_0$ be such a root and $I(t_0)$ be a small neighborhood of $t_0$. It suffices to prove that
\begin{equation} \label{goal 1 for single scale}
\int_{I_{P,Q}}\left|\int_{I(t_0)}f(x-P(t))g(x-Q(t))\rho(t)\, dt\right|^r\,dx\lesssim\|f\|_p^r\|g\|_q^r,
\end{equation}
for $p,q\in(1,\nf)$, $\frac{1}{p}+\frac{1}{q}=\frac{1}{r}$, $r>\frac{d}{d+1}$. Because of the restriction on $I(t_0)$, the function $\rho$ in \eqref{goal 1 for single scale} can be dropped. Let $\rho_0$ be a bump function supported in $\{t: |t|\in(\frac{1}{2}, 2)\}$ and satisfies $\sum_{j\in\ZZ}\rho_0(2^jt)=1$ for all $t\in\ZR$. Then \eqref{goal 1 for single scale} will be proved once we can show that there is some $\epsilon>0$ such that
\begin{equation} \label{goal 2 for single scale}
\int_{I_{P,Q}}\left|\int f(x-P(t))g(x-Q(t))\rho_0(2^j(t-t_0))\, dt\right|^r\,dx\lesssim 2^{-\epsilon j}\|f\|_p^r\|g\|_q^r
\end{equation}
holds for all large positive $j$.
Changing variable $t-t_0\to t$ and translating $f$ and $g$ by $P(t_0)$ and $Q(t_0)$ respectively, \eqref{goal 2 for single scale} becomes
\begin{equation}
\int_{I_{P,Q}}\left|\int f(x-P_1(t))g(x-Q_1(t))\rho_0 (2^j t)\, dt\right|^r\,dx\lesssim 2^{-\epsilon j}\|f\|_p^r\|g\|_q^r,
\end{equation}
where $P_1(t):=P(t+t_0)-P(t_0)$ and $Q_1(t):=Q(t+t_0)-Q(t_0)$. By the support of $\rho_0$, $|t|\simeq 2^{-j}$. This implies that $P_1(t)\lesssim 2^{-j}$ and $Q_1(t)\lesssim 2^{-j}$ by mean value theorem. So we can for free restrict $x$ to an interval of length $\simeq 2^{-j}$. Let $I_N$ be such an interval and define
$$
T_N(f,g)(x)=\hichi_{I_N}(x)\int f(x-P_1(t))g(x-Q_1(t))\rho_0 (2^j t)\, dt.
$$

It remains to show 
\begin{equation} \label{goal 3 for single scale}
\|T_N(f,g)\|_r\lesssim 2^{-\epsilon j}\|f\|_p\|g\|_q.
\end{equation}
By Fubini theorem, $T_N$ is bounded with norm $\lesssim 2^{-j}$ when $r=1$. Next we aim to get a slow increasing $L^1\times L^1\to L^{\frac{1}{2}}$ norm. By Cauchy-Schwarz inequality,
\begin{equation} \label{1/2 norm}
\int|T_N(f,g)(x)|^{\frac{1}{2}}\,dx\lesssim 2^{-j/2}\|T_N(f,g)\|_1^{\frac{1}{2}}.
\end{equation}
$\|T_N(f,g)\|_1$ can be calculated by changing variables $u=x-P_1(t)$ and $v=x-Q_1(t)$. Using Taylor expansion and the fact that $t_0$ has multiplicity at most $d$, the Jacobian $Q_1'(t)-P_1'(t)$ is bounded below by $2^{-dj}$. Therefore
\begin{equation} \label{1 norm}
\|T_N(f,g)\|_1\lesssim 2^{dj}\|f\|_1\|g\|_1.
\end{equation}
Combining \eqref{1/2 norm} and \eqref{1 norm}, we get
\begin{equation}\label{1/2 final}
\|T_N(f,g)\|_{\frac{1}{2}}\lesssim 2^{(d-1)j}\|f\|_1\|g\|_1.
\end{equation}
Interpolating \eqref{1/2 final} with the $L^1$-norm, we obtain \eqref{goal 3 for single scale}.
\end{proof}

By lemma \ref{lemma on single scale}, to prove Theorem \ref{thm} it suffices to prove the following theorem.
\begin{theorem} \label{thm: large j}
Let $P$ and $Q$ be two polynomials with distinct leading and trailing degrees. Then there is a large $N$ depending on $P$ and $Q$ such that $\sum_{|j|>N}T_j(f,g)(x)$ is bounded from $L^p \times L^q$ to $L^r$ for all $p,q\in (1,\nf)$, $\frac{1}{r}=\frac{1}{p}+\frac{1}{q}$.
\end{theorem}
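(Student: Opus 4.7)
The plan is to decompose each $T_j$ in frequency at the natural scale $2^{-j}$ dictated by the support of $\rho(2^j\cdot)$, peel off paraproduct-like pieces that sum absolutely in $j$, and reduce matters to a purely bilinear core that will be controlled by interpolating a scale-by-scale decay at $L^2\times L^2\to L^1$ against a moderate blow-up estimate at general $L^p\times L^q\to L^r$. I would first split $|j|>N$ into $j>N$ and $j<-N$: for $|t|\simeq 2^{-j}$ with $j\to+\infty$ the trailing terms $a_{e_1}t^{e_1}$ and $b_{e_2}t^{e_2}$ dominate $P,Q$, while for $j\to-\infty$ the leading terms dominate. Because the hypothesis forces $e_1\neq e_2$ and $d_1\neq d_2$, after rescaling $t=2^{-j}s$ the phase in $m_j$ behaves like $\xi\,\alpha\,2^{-j\lambda_1}s^{\lambda_1}+\eta\,\beta\,2^{-j\lambda_2}s^{\lambda_2}$ with $\lambda_1\neq\lambda_2$ at each end of the $j$-range --- exactly the nondegeneracy underlying the bilinear approach of \cite{LAP,LX}.

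Next, I would insert Littlewood--Paley projections $f=\sum_{k_1}f_{k_1}$, $g=\sum_{k_2}g_{k_2}$ and sort the pairs $(k_1,k_2)$ by the sizes of the rescaled frequencies $2^{k_1-j\lambda_1}$ and $2^{k_2-j\lambda_2}$. Off-diagonal pieces, where at least one rescaled frequency is either much larger or much smaller than $1$, admit rapid decay in $m_j$ via (non-)stationary phase / integration by parts in $t$, yielding Coifman--Meyer-type symbols whose bilinear norms sum geometrically in $j$. The remaining high-high core, supported where both rescaled frequencies are of order $1$, is what requires real work. For it I would formulate the two propositions promised in the section introduction: (a) a decay bound $\lesssim 2^{-\delta|j|}\|f\|_2\|g\|_2$ in $L^1$, obtained through a $TT^*$ reduction to a single-scale oscillatory form followed by a $\sigma$-uniformity argument as in \cite{LAP}; and (b) a moderate blow-up bound $\lesssim 2^{C|j|}\|f\|_p\|g\|_q$ in $L^r$, valid for every admissible $(p,q,r)$, obtained by adapting the time-frequency analysis of \cite{LX}. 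Interpolating (a) against (b) produces a net gain $2^{-\delta'|j|}$ as soon as $r$ exceeds the threshold $d/(d+1)$ of Theorem \ref{thm}, and summation in $j$ finishes the proof.

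The main obstacle will be (a). The $TT^*$ unfolding produces an oscillatory form with phase $\xi(P(t)-P(s))+\eta(Q(t)-Q(s))$ tested against a rescaled bump in $(t,s)$; the $\sigma$-uniformity step then demands that this phase fail to correlate with any character from a controlled family of lower-degree polynomial phases, uniformly in $j$. It is precisely the distinctness of both leading and trailing degrees --- invoked separately at $j\to+\infty$ and $j\to-\infty$ --- that prevents $\xi P(t)$ and $\eta Q(t)$ from cooperating to mimic such a character after rescaling at either end of the scale range. Ensuring that the resulting decay rate $\delta$ is strong enough to dominate the $C|j|$ blow-up from (b) all the way down to the sharp exponent $r>d/(d+1)$ will be the most delicate bookkeeping in the argument.
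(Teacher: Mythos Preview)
Your overall architecture---split by the sign of $j$, Littlewood--Paley decompose, dispose of off-resonant pieces as paraproducts, and attack the remaining core by interpolating a decay estimate against a blow-up estimate---matches the paper. But the parameter in which you propose to gain decay is wrong, and this is a genuine gap rather than a bookkeeping issue.

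You want (a) a bound $\lesssim 2^{-\delta|j|}$ at $L^2\times L^2\to L^1$ and (b) a bound $\lesssim 2^{C|j|}$ at general exponents, then sum in $j$. But for the model monomials $P(t)=t^a$, $Q(t)=t^b$ the core piece $T_{j,m}$ (where $m$ is the common logarithmic size of the rescaled frequencies, $|\xi|/2^{aj}\simeq|\eta|/2^{bj}\simeq 2^m$) is exactly scale-invariant in $j$: its operator norm depends on $m$ alone. There is no decay in $j$ to extract. The paper instead fixes $m$, sums over $j$ first, and only then sums in $m$. For fixed $m$ the frequency supports $|\xi|\simeq 2^{aj+m}$, $|\eta|\simeq 2^{bj+m}$ are essentially disjoint as $j$ varies, so at $L^2\times L^2\to L^1$ the $j$-sum collapses by orthogonality to a single-scale estimate, giving $\bigl\|\sum_{j>N}T_{j,m}\bigr\|_{L^2\times L^2\to L^1}\lesssim 2^{-\epsilon m}$ (Proposition~\ref{prop 221}); at general exponents the $j$-sum is handled by the time-frequency machinery of \cite{LX}, giving only the linear growth $\bigl\|\sum_{j>N}T_{j,m}\bigr\|_{L^p\times L^q\to L^{r,\infty}}\lesssim m$ (Proposition~\ref{prop pqr}). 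Interpolation and summation in $m$ then finish. The $TT^*$ and $\sigma$-uniformity arguments you cite are designed to produce decay in $m$, not in $j$.

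Two misconceptions follow from this. First, your ``high-high core, where both rescaled frequencies are of order~$1$'' mislocates the enemy: the hard region is $m=n\gg 0$, where both rescaled frequencies are large and comparable. That is the stationary-phase regime, in which the symbol decays only like $2^{-m/2}$---far from the rapid decay you claim for pieces with at least one rescaled frequency away from order~$1$. Second, the threshold $r>d/(d+1)$ plays no role in Theorem~\ref{thm: large j}, which holds for the full range $r>\tfrac12$. The correlation degree $d$ enters the main theorem only through Lemma~\ref{lemma on single scale}, which treats the finitely many scales $|j|\le N$ where neither the leading nor the trailing monomial dominates.
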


From the definition \eqref{def of Tj}, we see that $j>N$ corresponds to small $|t|$, in which case the trailing term dominates each polynomial; $j<-N$ corresponds to large $|t|$, in which case $P$ and $Q$ behave almost the same as their leading term. We will only deal with $\sum_{j>N}T_j(f,g)(x)$ since the other case is similar. 

Let $P,Q$ be polynomials written as \eqref{poly P} and \eqref{poly Q}. in When $j$ is large (i.e. $|t|$ is close to $0$), the trailing terms $a_{e_1}t^{e_1}$ and $b_{e_2}t^{e_2}$ dominate $P(t)$ and $Q(t)$, respectively. Since all the constants in our proof are allowed to depend on the coefficients of $P$ and $Q$, we may assume without loss of generality that $a_{e_1}=b_{e_2}=1$. For notation simplicity, from now on we denote $a:=e_1$ and $b:=e_2$. Recall that $e_1\neq e_2$ and thus we may assume $a<b$. With these new notations, we can write $P(t)=t^a+P_{\epsilon}(t)$ and $Q(t)=t^b+Q_{\epsilon}(t)$, where $P_{\epsilon}(t)$ (resp. $Q_{\epsilon}(t)$) consists of terms whose degree is higher than $a$ (resp. $b$). As $P_{\epsilon}(t)$ and $Q_{\epsilon}(t)$ are small when $j>N$ and can be viewed as error terms. We urge the reader to ignore them in the first reading of this paper.

The overall idea of the proof is to look at the size of the symbol $m_j(\xi, \eta)$ defined in \eqref{def of m_j}, which can be estimated by stationary phrase method after proper cut-off and rescaling. By a change of variable, 
\begin{equation} \label{symbol}
m_j(\xi,\eta)=\int \rho(t)e^{-2\pi i\left(\frac{\xi}{2^{aj}}(t^a+\epsilon_P(t))+\frac{\eta}{2^{bj}}(t^b+\epsilon_Q(t))\right)}\, dt,
\end{equation}
where
\begin{align}
\epsilon_P(t):=2^{aj}P_{\epsilon}(2^{-aj}t);\\
\epsilon_Q(t):=2^{bj}Q_{\epsilon}(2^{-bj}t).
\end{align}

Clearly $|\epsilon_P(t)|\le 2^{-N}|t^a|$ and $|\epsilon_Q(t)|\le 2^{-N}|t^b|$ as $j>N$. The expression \eqref{symbol} suggests that we need to consider the sizes of $\frac{\xi}{2^{aj}}$ and $\frac{\eta}{2^{bj}}$. Therefore we choose $\Phi\in\cs(\ZR)$ such that $\hat{\Phi}$ is supported on $\{\xi: |\xi|\in (\frac{1}{2},2)\}$ and 
$$
\sum_{m\in\ZZ}\hat{\Phi}\left(\frac{\xi}{2^m}\right)=1, \,\,\xi\neq 0.
$$
Then decompose $T_j$ as $T_j=\sum_{(m,n)\in\ZZ^2}T_{j,m,n}$ where 
\begin{equation} \label{def of T_jmn}
T_{j,m,n}(f,g)(x):=\iint \hat{f}(\xi)\hat{g}(\eta)e^{2\pi i(\xi+\eta)x}m_j(\xi,\eta)\hat{\Phi}\left(\frac{\xi}{2^{aj+m}}\right)\hat{\Phi}\left(\frac{\eta}{2^{bj+n}}\right)\,d\xi d\eta,
\end{equation}
is the bilinear operator with symbol 
\begin{equation}
\begin{split}
M_{j,m,n}(\xi,\eta):=&m_j(\xi,\eta)\hat{\Phi}\left(\frac{\xi}{2^{aj+m}}\right)\hat{\Phi}\left(\frac{\eta}{2^{bj+n}}\right)\\
=&\int \rho(t)e^{-2\pi i\left(\frac{\xi}{2^{aj}}(t^a+\epsilon_P(t))+\frac{\eta}{2^{bj}}(t^b+\epsilon_Q(t))\right)}\, dt\hat{\Phi}\left(\frac{\xi}{2^{aj+m}}\right)\hat{\Phi}\left(\frac{\eta}{2^{bj+n}}\right)
\end{split}
\end{equation}

In estimating its size, the symbol $M_{j,m,n}$ can be viewed roughly as 
\begin{equation}
\int \rho(t)e^{-2\pi i(2^mt^a+2^nt^b)}\,dt,
\end{equation}
which decays rapidly if $|m-n|$ is large. In fact, $\sum_{j>N}\sum_{|m-n|\gtrsim 1}T_{j,m,n}(f,g)(x)$ can be reduced to the paraproduct studied in \cite{LNY} (see Section 7.2 in \cite{LX} for details). To deal with the remaining $|m-n|\lesssim 1$ case, we can assume without loss of generality that $m=n$. For notation simplicity, denote $M_{j,m}:=M_{j,m,m}$ and $T_{j,m}:=T_{j,m,m}$. Using oddness of $\rho$ and Taylor expansion, $\sum_{j>N}\sum_{m\le 0}T_{j,m}$ can also be reduced to the paraproduct in \cite{LNY}. Thus we will only focus on the most difficult case in proving Theorem \ref{thm: large j}: handing the operator $\sum_{j>N}\sum_{m>0}T_{j,m}$. Our goal is to prove 
\begin{theorem} \label{thm T_jm}
For all $p,q\in (1,\nf)$, $\frac{1}{r}=\frac{1}{p}+\frac{1}{q}$,
\begin{equation}
\left\|\sum_{j>N}\sum_{m>0} T_{j,m}(f,g)\right\|_r\lesssim \|f\|_p\|g\|_q.
\end{equation}
\end{theorem}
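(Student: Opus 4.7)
The plan is to derive Theorem \ref{thm T_jm} from two complementary estimates on the single building blocks $T_{j,m}$ and then interpolate. Concretely, I would prove (i) an endpoint decay bound of the form $\|T_{j,m}(f,g)\|_1 \lesssim 2^{-\delta m}\|f\|_2\|g\|_2$ for some $\delta>0$, uniformly in $j>N$ and $m>0$, together with (ii) a moderate-blow-up bound $\|T_{j,m}(f,g)\|_r \lesssim 2^{C(p,q)\, m}\|f\|_p\|g\|_q$ valid for every admissible triple $(p,q,r)$. Bilinear interpolation between (i) and (ii), with $(p,q)$ chosen close enough to $(2,2)$ that the gain in (i) dominates the loss in (ii), produces a bound that is geometrically summable in $m>0$. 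A standard almost-orthogonality/square-function argument in $j>N$, available after the paraproduct pieces have been stripped off as arranged in Section \ref{section: reduction}, then closes the proof.

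For the decay estimate (i), I would apply the $TT^*$ method to the multiplier $M_{j,m}$. The inner oscillatory integral in \eqref{symbol}, combined with the frequency cutoffs $\hat{\Phi}(\xi/2^{aj+m})\hat{\Phi}(\eta/2^{bj+m})$, is essentially
\[
\int \rho(t)\, e^{-2\pi i(\xi' t^a + \eta' t^b)}\,dt
\]
with $|\xi'|,|\eta'|\sim 2^m$. Classical stationary phase yields decay $2^{-m/2}$ on most of the frequency support; the exceptional locus is a one-dimensional curve where the critical point $t_\ast(\xi',\eta')$ saturates and the operator mimics a modulation-invariant bilinear Hilbert transform along $(t^a,t^b)$. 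I would handle this resonant contribution by the $\sigma$-uniformity method: decompose one of the inputs into a part well-correlated with a suitable family of oscillating factors $\{\sigma\}$ built from the stationary-phase data, plus a $\sigma$-uniform remainder. The uniform part picks up the desired $2^{-\delta m}$ gain by Cauchy--Schwarz and a direct $TT^*$ computation, while the structured part is absorbed into a Lacey--Thiele-type bilinear Hilbert bound.

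For the blow-up estimate (ii), I would follow the Li--Xiao scheme from \cite{LX}: regard $T_{j,m}$ as a single-scale bilinear operator whose kernel oscillates at frequency $\sim 2^m$, invoke Lemma \ref{lemma on single scale} after rescaling to unit scale, and carefully track the dependence of the constants on $m$. The cost is only polynomial in $m$, which is harmless once combined with (i) via interpolation.

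The hard part will be the $\sigma$-uniformity step in (i). In \cite{LX} one of the polynomials is $P(t)=t$, so only a single genuinely nonlinear oscillation must be captured; here both $P$ and $Q$ contribute nonlinear terms of distinct trailing degrees $a<b$, and the effective phase $\xi' t^a + \eta' t^b$ together with the error contributions $\epsilon_P, \epsilon_Q$ has a richer critical-point geometry. The assumption that $P$ and $Q$ have distinct leading \emph{and} trailing degrees is precisely what keeps the relevant family of $\sigma$'s a clean one-parameter family and prevents degeneracies, but the correct family still has to be identified, and the small errors $\epsilon_P, \epsilon_Q$ (bounded by $2^{-N}$ on the support of $\rho$) must be shown not to spoil the orthogonality estimate on the $\sigma$-uniform piece. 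Once this is done for arbitrary $a<b$, the rest of the argument reduces to standard paraproduct and interpolation machinery.
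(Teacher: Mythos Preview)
Your overall architecture matches the paper's: decay at $(2,2,1)$ via $TT^*$ and $\sigma$-uniformity (your (i)), a complementary bound at general $(p,q,r)$ (your (ii)), then interpolate and sum in $m$. Your description of (i) is close to the paper's, though the structured piece in the $\sigma$-uniformity step is disposed of by a second $TT^*$/oscillatory-integral computation (the $Q_0$ bound in Section~\ref{section: uniformity}), not by a Lacey--Thiele bilinear bound.

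The genuine gap is in (ii) and the $j$-sum. You state (ii) for a single $T_{j,m}$ with exponential loss $2^{C(p,q)m}$, interpolate to get decay only for $(p,q)$ ``close enough to $(2,2)$,'' and then defer the sum over $j>N$ to a ``standard almost-orthogonality/square-function argument.'' Neither step works as stated. First, Theorem~\ref{thm T_jm} asks for all $p,q\in(1,\infty)$, not just a neighborhood of $(2,2)$; to reach every admissible triple by interpolation the loss in (ii) must be sub-exponential in $m$. Second, and more seriously, the $j$-sum is not standard away from $L^2\times L^2\to L^1$. At that endpoint the disjoint frequency supports $|\xi|\sim 2^{aj+m}$, $|\eta|\sim 2^{bj+m}$ allow Cauchy--Schwarz in $j$ (this is exactly how the paper reduces Proposition~\ref{prop 221} to a single scale), but at general $(p,q,r)$, in particular for $r<1$, no such orthogonality is available: the $\ell^p$-sum of Littlewood--Paley norms does not control $\|f\|_p$ when $p\neq 2$. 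In the paper, Proposition~\ref{prop pqr} is formulated directly for the summed operator $\sum_{j>N}T_{j,m}$, and its proof uses the full Li--Xiao time-frequency machinery (restricted weak-type setup, exceptional set $\Omega$, tree decomposition, and a maximal-function bound on the $t$-integral). That argument produces only \emph{linear} growth $m$, and it is this mild blow-up, for the $j$-summed operator, that makes interpolation with (i) succeed at every admissible $(p,q,r)$. Your proposed route through Lemma~\ref{lemma on single scale} after rescaling addresses neither point: that lemma says nothing about summing in $j$, and whatever single-scale constant it yields would not survive a naive summation over $j$.
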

By interpolation, the above theorem follows from two propositions below.
\begin{proposition} \label{prop 221}
\begin{equation}
\left\|\sum_{j>N}T_{j,m}(f,g)\right\|_1\lesssim 2^{-\epsilon m}\|f\|_2\|g\|_2 \text{ for some } \epsilon>0.
\end{equation}
\end{proposition}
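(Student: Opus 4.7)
The plan is to prove Proposition \ref{prop 221} by combining the $TT^*$ / stationary phase method with Xiaochun Li's $\sigma$-uniformity machinery as in \cite{LAP,LX}. By duality against $h\in L^\infty$, the target estimate is equivalent to the trilinear bound
\[
\sum_{j>N}\left|\int T_{j,m}(f,g)(x)\,h(x)\,dx\right|\lesssim 2^{-\epsilon m}\|f\|_2\|g\|_2\|h\|_\infty.
\]
My approach splits naturally into two stages: first establish a decay estimate of type $L^2\times L^\infty\to L^2$ via $TT^*$, then upgrade the $L^\infty$ to $L^2$ (and correspondingly the output $L^2$ to $L^1$) via $\sigma$-uniformity.

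First I would prove the intermediate bound
\[
\Big\|\sum_{j>N}T_{j,m}(f,g)\Big\|_2\lesssim 2^{-\epsilon_1 m}\|f\|_2\|g\|_\infty
\]
by a $TT^*$ argument. Squaring the $L^2$ norm of a single piece $T_{j,m}(f,g)$ and invoking Plancherel reduces matters to estimating $|M_{j,m}(\xi,\eta)|$. After the rescaling in \eqref{symbol} the phase $\tfrac{\xi}{2^{aj}}(t^a+\epsilon_P(t))+\tfrac{\eta}{2^{bj}}(t^b+\epsilon_Q(t))$ has amplitude $\sim 2^m$ on the support of the cutoffs. Because $a=e_1<e_2=b$ are distinct and $|\epsilon_P|,|\epsilon_Q|\lesssim 2^{-N}$ can be absorbed as perturbations, the phase admits at most one nondegenerate critical point on $\mathrm{supp}(\rho)$, and van der Corput gives $|M_{j,m}(\xi,\eta)|\lesssim 2^{-m/2}$. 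The sum in $j$ then assembles through near-orthogonality: since $a<b$, the output of $T_{j,m}$ has spatial frequency support essentially in $\{|\zeta|\sim 2^{bj+m}\}$, so the pieces are Littlewood--Paley-orthogonal across different $j$.

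Next, to upgrade to the full $L^2\times L^2\to L^1$ estimate, I would introduce a family $\Sigma$ of unit $L^2$-functions built from modulations by the critical-point phase $\phi_{j,m}$ extracted above. Decompose $g=g_u+\sum_k c_k\phi_k$ iteratively, where $g_u$ is $\sigma$-uniform relative to $\Sigma$ (meaning $\sup_{\phi\in\Sigma}|\langle g_u,\phi\rangle|\le\sigma\|g_u\|_2$), each $\phi_k\in\Sigma$, and $\sum|c_k|^2\le\|g\|_2^2$. For the uniform piece, a modified $TT^*$ computation gains an extra factor of $\sigma$ because the relevant oscillatory integral reduces to a Fourier coefficient of $g_u$ against an element of $\Sigma$; for the structured pieces, the previous-step bound applies directly since $\|\phi_k\|_\infty$ is controlled, and one then sums in $k$ via Cauchy--Schwarz. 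Balancing $\sigma$ against the number of structured pieces yields the required decay $2^{-\epsilon m}$ for some $\epsilon\in(0,\epsilon_1)$.

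The main obstacle will be this second stage: choosing $\Sigma$ so that both the $\sigma$-gain for uniform functions \emph{and} the bound on the number of structured pieces hold simultaneously. The $\sigma$-gain requires interpreting the symbol-level oscillatory integral as a pairing against precisely the class of modulations comprising $\Sigma$, which in turn demands a sharp description of how $\phi_{j,m}$ depends on $(\xi,\eta)$, uniformly in $j>N$. This uniformity is what the hypothesis of distinct leading and trailing degrees buys us: it guarantees that the stationary-phase geometry is stable across all relevant scales so that a single family $\Sigma$ suffices.
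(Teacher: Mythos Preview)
Your proposal has the right ingredients but a genuine gap in the first stage. For a bilinear multiplier the pointwise bound $|M_{j,m}(\xi,\eta)|\lesssim 2^{-m/2}$ does \emph{not} give an $L^2\times L^\infty\to L^2$ bound via ``Plancherel'': bilinear multiplier norms are not controlled by $\|M\|_\infty$, and squaring the output $L^2$-norm yields $\int\bigl|\int\hat f(\xi)\hat g(\zeta-\xi)M_{j,m}(\xi,\zeta-\xi)\,d\xi\bigr|^2\,d\zeta$, for which the symbol bound alone is useless. The paper's $TT^*$ is different and more delicate: after rescaling $T_{j,m}$ to $B_{j,m}$ and applying stationary phase to write the symbol as $2^{-m/2}e^{i2^m\phi(\xi,\eta)}$, one squares $\|B_{j,m}(f,g)\|_2$ to obtain a four-variable integral with phase $Q_\tau(\xi,\eta)=\phi(\xi,\eta)-\phi(\xi-\tau,\eta+2^{-(b-a)j}\tau)$; the key estimate $|\partial_\xi\partial_\eta Q_\tau|\gtrsim|\tau|$ feeds H\"ormander's oscillatory integral theorem and produces $\|B_{j,m}(f,g)\|_1\lesssim 2^{((b-a)j-m)/8}\|f\|_2\|g\|_2$. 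Note this decays \emph{only} when $m>(b-a)j$; it is not a uniform $2^{-\epsilon_1 m}$.

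This $j$-dependence is the organizing principle you are missing. After reducing to a single $j$ by frequency orthogonality, the paper runs a dichotomy: $TT^*$ covers the regime $m\gtrsim(b-a)j$, while $\sigma$-uniformity covers the complementary regime $(b-a)j>(1-\delta)m$. In the latter, the uniformity is applied to $\hat f$ on the Fourier side (the class $\mathcal Q$ consists of phases $A(\xi^{b/(b-a)}+\epsilon(\xi))+B\xi$ arising from the stationary-phase asymptotic of the symbol), and the structured case $\hat f=e^{iq}$ does not fall out for free---it requires its own oscillatory-integral/$TT^*$ argument (the estimate on $\Lambda_q(g,h)$) to gain $2^{-\epsilon m}$. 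Your plan of applying $\sigma$-uniformity to $g$ in physical space to ``upgrade $L^\infty$ to $L^2$'' is a different mechanism, and without a correct Step~1 there is nothing to upgrade from.
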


\begin{proposition} \label{prop pqr}
For $p,q\in (1,\nf)$, $\frac{1}{r}=\frac{1}{p}+\frac{1}{q}$,
\begin{equation}
\left\|\sum_{j>N}T_{j,m}(f,g)\right\|_{r,\nf}\lesssim m\|f\|_p\|g\|_q, 
\end{equation}
\end{proposition}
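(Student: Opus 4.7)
The plan is to deduce the desired weak-type bound from a Calder\'on--Zygmund decomposition argument in the spirit of \cite{LX}. Fix $f\in L^p$, $g\in L^q$ with unit norm and $\lambda>0$; the goal is to bound the super-level set $\{|\sum_{j>N} T_{j,m}(f,g)| > \lambda\}$ by $(m/\lambda)^r$ up to constants. Decompose $f = f_g + \sum_k b_k$ at height $\lambda^{r/p}$ and $g = g_g + \sum_\ell c_\ell$ at height $\lambda^{r/q}$, so that $\|f_g\|_\infty \lesssim \lambda^{r/p}$, and each bad piece $b_k$ (resp.\ $c_\ell$) has mean zero, is supported in a dyadic interval $I_k$ (resp.\ $J_\ell$), and satisfies $\|b_k\|_1 \lesssim \lambda^{r/p}|I_k|$. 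After removing the usual exceptional set $\bigcup_k I_k^{*}\cup\bigcup_\ell J_\ell^{*}$ of measure $\lesssim \lambda^{-r}$, split the operator into the four bilinear contributions $T_{j,m}(f_g,g_g)$, $T_{j,m}(f_b,g_g)$, $T_{j,m}(f_g,g_b)$, and $T_{j,m}(f_b,g_b)$.

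For the good--good piece, Proposition \ref{prop 221} combined with a H\"older-type interpolation between the $L^\infty$ bounds $\|f_g\|_\infty\lesssim\lambda^{r/p}$, $\|g_g\|_\infty\lesssim\lambda^{r/q}$ and the original $L^p,L^q$ norms yields $\|\sum_{j>N} T_{j,m}(f_g,g_g)\|_1 \lesssim 2^{-\epsilon m}\lambda^{1-r}$, and Chebyshev's inequality then gives a super-level bound which is exponentially better in $m$ than what is required. For the mixed and bad--bad pieces, the crucial feature is the spatial localization built into $T_{j,m}$: the factor $\rho(2^jt)$ restricts $t$ to $|t|\simeq 2^{-j}$, so $T_{j,m}(b_k,\cdot)(x)$ vanishes unless $x$ lies within distance $\simeq 2^{-aj}+2^{-bj}$ of $I_k$. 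Outside the enlarged cube $I_k^{*}$, only scales $j$ with $2^{-aj}\gtrsim |I_k|$ contribute, and for each such scale Lemma \ref{lemma on single scale} gives a uniform single-scale bound. Summing over the $\simeq \log|I_k|^{-1}$ admissible scales and then over $k$ via the $L^1$ mass control on $\sum_k|I_k|$ produces a logarithmic-type factor, which after optimization in $\lambda$ is absorbed into the desired factor of $m$.

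The main obstacle is the bad--bad contribution. There the two families of cubes $\{I_k\}$ and $\{J_\ell\}$ may live at very different scales, and each $j$ interacts non-trivially only with pairs $(I_k,J_\ell)$ whose sizes are compatible with $2^{-aj}$ and $2^{-bj}$ respectively. A two-parameter stopping-time procedure on the pairs $(k,\ell)$ is needed to show that the resulting double sum converges with only a linear loss in $m$. This is exactly the place where the methods of \cite{LX} must be adapted; the $m$-loss rather than a uniform constant arises from an honest count of how many scales $j$ admit a productive application of Lemma \ref{lemma on single scale} before the geometric summability in $j$ fails. Once this bad--bad estimate is in hand, collecting the four contributions and applying the layer-cake formula completes the weak-type bound.
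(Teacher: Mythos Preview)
Your proposal takes a fundamentally different route from the paper, and it has genuine gaps.

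\textbf{What the paper actually does.} The paper does \emph{not} run a Calder\'on--Zygmund decomposition of $f$ and $g$ into good and bad parts. Instead it works in the restricted weak-type framework: given sets $F_1,F_2,F_3$, it removes an exceptional set $\Omega=\bigcup_{i=1}^2\{\fm\hichi_{F_i}>C|F_i|/|F_3|\}$, inserts smooth cutoffs $\psi_{aj+m},\psi_{bj+m}$ adapted to $\Omega$, and then performs a \emph{time--frequency} decomposition. The spatial pieces are dyadic intervals $I_{n,j}$ of length $2^{-aj}$ (scales dictated by the operator, not by $f$ and $\lambda$), the resulting tiles $(j,n)$ are organized into trees, and on each tree a change of variables reduces matters to the Hardy--Littlewood maximal function and the tree estimates of \cite{LX}. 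The factor $m$ arises from the mismatch between the spatial scale $2^{-aj}$ of $I_{n,j}$ and the finer frequency scale $2^{aj+m}$ of $\Phi_{aj+m}$; this $2^m$ gap propagates through the tree counting in \cite{LX}.

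\textbf{Where your argument breaks.} First, the good--good step: you need $\|f_g\|_2\|g_g\|_2\lesssim\lambda^{1-r}$ to invoke Proposition~\ref{prop 221}, and you propose to get this by interpolating $\|f_g\|_p\lesssim 1$ with $\|f_g\|_\infty\lesssim\lambda^{r/p}$. That interpolation only goes the right direction when $p\le 2$; for $p>2$ there is no upper bound on $\|f_g\|_2$ from those two pieces of data (take $f_g$ a tiny constant on a huge interval). So the good--good bound fails for a large part of the claimed range.

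Second, and more seriously, your mechanism for producing the factor $m$ is not correct. You assert that outside $I_k^{*}$ only scales with $2^{-aj}\gtrsim|I_k|$ survive, giving $\simeq\log|I_k|^{-1}$ scales, and that ``after optimization in $\lambda$'' this becomes $m$. But $|I_k|$ is determined by the Calder\'on--Zygmund stopping time on $f$ at height $\lambda^{r/p}$ and has nothing to do with $m$; no choice of $\lambda$ links the two. Moreover, Lemma~\ref{lemma on single scale} gives a bound for each $T_{j,m}$ that is uniform in $j$ with \emph{no} decay, so summing it over any unbounded range of $j$ diverges. The honest reason the sum $\sum_{j>N}T_{j,m}$ converges is the Littlewood--Paley orthogonality built into $\hat\Phi(\xi/2^{aj+m})$: the pieces $f*\Phi_{aj+m}$ for different $j$ live on disjoint frequency annuli. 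A Calder\'on--Zygmund decomposition in physical space does not see this orthogonality, which is exactly why the paper switches to a tile/tree framework.

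Finally, the bad--bad term: acknowledging that it is ``the main obstacle'' and invoking an unspecified ``two-parameter stopping-time procedure'' is not a proof. Even for much simpler bilinear operators the bad--bad interaction is the decisive difficulty when $r<1$, and nothing in your outline indicates how the mean-zero of $b_k$ and $c_\ell$ would be used against the bilinear kernel of $T_{j,m}$.

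In short, the scheme you propose does not close, and the paper's argument is structurally different: it is a time--frequency tree argument in the style of \cite{LX}, not a bilinear Calder\'on--Zygmund decomposition.
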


\section{$TT^*$ method} \label{section: TT*}
\setcounter{equation}0

We prove Proposition \ref{prop 221} in this section and the next. 

Since we can for free insert cut-offs on $\hat{f}$ and $\hat{g}$ according to the support of $M_{j,m}$, in proving Proposition \ref{prop 221} we only need to consider the estimate for a single scale, i.e.
\begin{proposition} \label{prop 221 single scale}
$\|T_{j,m}(f,g)\|_1\lesssim 2^{-\epsilon m}\|f\|_2\|g\|_2$ for any $j>N$ and $m>0$.
\end{proposition}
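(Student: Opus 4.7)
The plan is to apply a $TT^*$ argument to reduce the single-scale estimate to an oscillatory integral in the time variables, whose sharp bound will be supplied by the $\sigma$-uniformity method in Section \ref{section: uniformity}. Dualize: it suffices to bound the trilinear form
\begin{equation*}
\Lambda(f,g,h) = \iint f_a(x-P(t))\, g_b(x-Q(t))\, \overline{h(x)}\, 2^j\rho(2^jt)\, dt\, dx
\end{equation*}
by $2^{-\epsilon m}\|f\|_2\|g\|_2\|h\|_\infty$, where $f_a, g_b$ denote the Littlewood--Paley pieces of $f,g$ at frequencies $\simeq 2^{aj+m}$ and $\simeq 2^{bj+m}$. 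Changing variables $u=x-P(t)$ and applying Cauchy--Schwarz in $u$ peels off $f_a$ and reduces the problem to showing $\|W\|_2\lesssim 2^{-\epsilon m}\|g\|_2\|h\|_\infty$, where $W(u):=\int g_b(u+P(t)-Q(t))\,\overline{h(u+P(t))}\,2^j\rho(2^jt)\,dt$.

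Next, expand $\|W\|_2^2$ as a $(t,s,u)$-integral, carry out the $u$-integration via Plancherel using the frequency localization of $g_b$, and Fourier-expand the $h$-pairing in $u$ to introduce an auxiliary frequency variable $\zeta$. The result is an iterated integral whose phase takes the form
\begin{equation*}
\xi\bigl[(P-Q)(t)-(P-Q)(s)\bigr]+\zeta\bigl[P(t)-P(s)\bigr],\qquad |\xi|\simeq 2^{bj+m}.
\end{equation*}
Substituting $s=t-v$ and Taylor-expanding both differences about $t$, the leading behavior is dictated by the trailing monomials $t^a,t^b$ of $P,Q$; the hypothesis $a<b$ guarantees that the two phase components are linearly independent as functions of $(t,v)$, so in the non-resonant regime one obtains the desired $2^{-\epsilon m}$ decay by integration by parts or stationary phase in the time variable.

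The main obstruction is the resonant regime: since $h\in L^\infty$ carries no frequency localization, $\zeta$ ranges freely over $\ZR$, and naive stationary phase fails on the locus $\zeta\approx -\xi(P-Q)'(t)/P'(t)$. This is precisely the point at which the $\sigma$-uniformity method of Section \ref{section: uniformity} must enter: the resonant contribution will be measured by correlations of $\hat g$ (or a related quantity) against a family of polynomial phases of bounded degree, and $\sigma$-uniformity of $g$ with respect to that family renders these correlations small after a suitable decomposition of $g$ into a uniform part and a structured part. The outcome of the present $TT^*$ step is therefore to organize the Plancherel bookkeeping so as to isolate exactly the oscillatory integral that the uniformity lemma in Section \ref{section: uniformity} will bound; I expect that step, not the $TT^*$ manipulations, to be the technical heart of the proof.
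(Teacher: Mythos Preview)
Your outline misses the key structural dichotomy in the paper's argument. After rescaling $T_{j,m}$ to $B_{j,m}$, the proof splits according to the relative size of $(b-a)j$ and $m$ (Propositions~\ref{prop 1} and~\ref{prop 2}). The $TT^*$ computation of Section~\ref{section: TT*} is carried out on the frequency side---one applies stationary phase to the multiplier, expands $\|B_{j,m}\|_2^2$, and invokes H\"ormander's oscillatory integral theorem---and it yields only $\|B_{j,m}(f,g)\|_1\lesssim 2^{((b-a)j-m)/8}\|f\|_2\|g\|_2$. This decays in $m$ when $(b-a)j\le(1-\delta)m$ but is useless when $(b-a)j\gtrsim m$. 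The $\sigma$-uniformity argument of Section~\ref{section: uniformity} is an \emph{independent} proof of the $2^{-\epsilon m}$ bound in the complementary regime $(b-a)j>(1-\delta)m$; it is not a post-processing step that cleans up a resonant set left over from $TT^*$. Your single-pass scheme does not account for this, and a $TT^*$ argument of the kind you sketch will fail in the large-$j$ regime for the same reason the paper's does.

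There is also a concrete gap in your $TT^*$ setup. After peeling off $f_a$ by Cauchy--Schwarz, the square $\|W\|_2^2$ contains the product $\overline{h(u+P(t))}\,h(u+P(s))$, and the step ``Fourier-expand the $h$-pairing in $u$ to introduce $\zeta$'' is not well-defined: $h\in L^\infty$ has no frequency localization, its Fourier transform is merely a tempered distribution, and the phase $\zeta[P(t)-P(s)]$ you write down comes with no usable amplitude bound on the $\zeta$-side. The paper sidesteps this by performing $TT^*$ in frequency so that only $\hat f,\hat g\in L^2$ appear. In Section~\ref{section: uniformity}, uniformity is tested on $\hat f$ (not $\hat g$); the structured alternative $\hat f=e^{iq}$ removes $f$ entirely, and only then does a second, local $TT^*$ involving $h$ enter, at which point $h$ is integrated against an explicitly oscillatory kernel whose phase has already been computed via stationary phase. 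Your Cauchy--Schwarz step discards the oscillation carried by $f$ before any of this can happen.
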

By rescaling, Proposition \ref{prop 221 single scale} is a consequence of 
\begin{proposition}
For any $j>N$ and $m>0$,
$\|B_{j,m}(f,g)\|_1\lesssim 2^{-\epsilon m}\|f\|_2\|g\|_2$,
where
\begin{equation}
\begin{split}
B_{j,m}(f,g)(x):=&2^{-\frac{(b-a)j}{2}}\int \rho(t)f*\Phi\left(\frac{x}{2^{(b-a)j}}-2^m(t^a+\epsilon_P(t))\right)\\
&\qquad g*\Phi(x-2^m(t^b+\epsilon_Q(t))) \, dt
\end{split}
\end{equation}
\end{proposition}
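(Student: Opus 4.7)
The plan is to apply $TT^*$ to reduce the $L^2\times L^2\to L^1$ bound with decay $2^{-\epsilon m}$ to a bilinear autocorrelation estimate in $g*\Phi$, which will then be closed in the next section via $\sigma$-uniformity. By duality, $\|B_{j,m}(f,g)\|_1=\int B_{j,m}(f,g)(x)\sigma(x)\,dx$ for some $\sigma$ with $\|\sigma\|_\infty\le 1$. Expanding $f*\Phi(z)=\int f(y)\Phi(z-y)\,dy$ and exchanging the order of integration rewrites this as $\langle f,L_{g,\sigma}\rangle$, where
\[
L_{g,\sigma}(y) := 2^{-\frac{(b-a)j}{2}}\iint \rho(t)\,\Phi\!\left(\tfrac{x}{2^{(b-a)j}}-2^m(t^a+\epsilon_P(t))-y\right)(g*\Phi)(x-2^m(t^b+\epsilon_Q(t)))\,\sigma(x)\,dx\,dt.
\]
Cauchy-Schwarz in $y$ reduces the claim to the linear-in-$g$ bound $\|L_{g,\sigma}\|_2\lesssim 2^{-\epsilon m}\|g\|_2$, uniformly over such $\sigma$.

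I would next compute $\|L_{g,\sigma}\|_2^2$ by squaring, introducing a second copy with variables $(s,x')$, and integrating $y$ out first. This produces the rapidly decaying factor $(\Phi*\widetilde{\bar\Phi})(\Delta)$ with
\[
\Delta := \tfrac{x-x'}{2^{(b-a)j}} - 2^m\bigl(t^a-s^a+\epsilon_P(t)-\epsilon_P(s)\bigr),
\]
which localizes $|\Delta|\lesssim 1$. Solving $\Delta = O(1)$ for $x'$ and changing variables absorbs the $2^{-(b-a)j}$ prefactor into the Jacobian and yields, up to a rapidly decaying tail,
\[
\|L_{g,\sigma}\|_2^2 \;\approx\; \iint \rho(s)\rho(t)\,\Psi(s,t)\,\bigl\langle (g*\Phi)(\cdot - 2^m(t^b+\epsilon_Q(t))),\,(g*\Phi)(\cdot - 2^m(s^b+\epsilon_Q(s)))\bigr\rangle_\sigma\,ds\,dt,
\]
where $\Psi$ is bounded and the inner product carries the factor $\sigma\bar\sigma$. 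Since $\widehat{g*\Phi}$ is supported at frequencies $\simeq 1$, this is an oscillatory bilinear form in $g$ whose effective phase is of size $\simeq 2^m(t^b-s^b)$, so that the required $2^{-2\epsilon m}\|g\|_2^2$ bound must come from cancellation in the $(s,t)$-integral.

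The main obstacle is the near-diagonal region $s\approx t$. There $2^m(t^b-s^b)\approx 2^m b\,t^{b-1}(s-t)$ is small unless $|s-t|\gtrsim 2^{-m}$, so a direct stationary-phase estimate at this degenerate scale produces no useful gain. Taylor-expanding the phase near the diagonal exhibits the obstruction as a family of polynomial-phase modulations of $g*\Phi$ parametrized by $(s,t)$, and the resolution is to split $g$ into a part that is $\sigma$-uniform relative to this family (so that the weighted inner products average to size $\lesssim 2^{-\epsilon m}$) and a structured remainder absorbed into a paraproduct estimate in the spirit of \cite{LNY}. Producing the bilinear form above is the content of this section; quantifying its decay by $\sigma$-uniformity is the task of the next.
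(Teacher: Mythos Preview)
Your reduction to the quadratic form in $g$ is not correct, and this hides the essential difficulty. After integrating out $y$ and solving $\Delta=O(1)$ for $x'$, you get $x'=x-2^{(b-a)j}\Delta-2^{(b-a)j+m}(t^a-s^a+\epsilon_P(t)-\epsilon_P(s))$. Hence the second copy of $g*\Phi$ is evaluated at $x-2^m(s^b+\epsilon_Q(s))-2^{(b-a)j+m}(t^a-s^a+\ldots)-2^{(b-a)j}\Delta$, not at $x-2^m(s^b+\epsilon_Q(s))$; the weight is $\sigma(x)\overline{\sigma(x')}$ at two genuinely different points, not $|\sigma|^2$. The extra shift $2^{(b-a)j+m}(t^a-s^a)$ is not a perturbation: it is typically much larger than $2^m(t^b-s^b)$ and carries the $a$-power coupling you dropped. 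Once this is restored, the expression is no longer a simple weighted autocorrelation of $g*\Phi$ with an ``effective phase of size $2^m(t^b-s^b)$,'' and your near-diagonal discussion no longer describes the obstruction.

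More structurally, the paper does not prove this proposition by a single $TT^*$-then-uniformity pipeline. It splits into two regimes governed by the size of $(b-a)j$ relative to $m$: a Fourier-side $TT^*$ on $\|B_{j,m}\|_2^2$ (stationary phase on $I_{\rho,m}$, then H\"ormander's oscillatory-integral estimate with the mixed Hessian of $\phi$) yields a bound $\lesssim 2^{((b-a)j-m)/8}$, which gives decay only when $(b-a)j\lesssim m$; for $(b-a)j>(1-\delta)m$ one instead fixes $g$, pairs with $h\in L^\infty$, and applies $\sigma$-uniformity to $\hat f$ (not to $g$) against the phase family $\mathcal Q=\{A(\xi^{b/(b-a)}+\epsilon(\xi))+B\xi\}$ produced by stationary phase. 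The structured case $\hat f=e^{iq}$ is then closed by a second, local $TT^*$ in the $g$ variable, not by a paraproduct estimate --- the paraproduct pieces were already removed in Section~\ref{section: reduction}. Your plan misses the regime split, applies uniformity to the wrong function, and invokes paraproducts where they play no role; as written it would not close.
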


This proposition follows from the two estimates below.
\begin{proposition} \label{prop 1}
$\|B_{j,m}(f,g)\|_1\lesssim 2^{\frac{(b-a)j-m}{8}}\|f\|_2\|g\|_2$ for any $j>N$ and $m>0$.
\end{proposition}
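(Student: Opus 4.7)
The plan is to apply a bilinear $TT^{*}$ argument: first bound $\|B_{j,m}(f,g)\|_2$ via Parseval and stationary phase on the symbol $m_j$, then pass to the $L^1$ bound through a spatial Cauchy--Schwarz exploiting the band-limited structure of the output. Since a trivial Cauchy--Schwarz already gives $\|B_{j,m}(f,g)\|_1\lesssim\|f\|_2\|g\|_2$, the claimed bound is genuinely new only in the regime $m>(b-a)j$, where we must extract $2^{-m/8}$ decay from oscillation.

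Write $F:=f*\Phi$, $G:=g*\Phi$, $\alpha_t:=2^m(t^a+\epsilon_P(t))$, $\beta_t:=2^m(t^b+\epsilon_Q(t))$. Expand $|B_{j,m}(f,g)(x)|^2$ as a double integral in $(t,s)$, substitute Fourier inversions for $F,\bar F,G,\bar G$, and integrate in $x$. The $dx$-integration produces a delta constraint $\xi_1-\xi_2+2^{(b-a)j}(\eta_1-\eta_2)=0$; combined with $|\xi_k|,|\eta_k|\simeq 1$ from the Fourier supports of $\hat F,\hat G$, the shift $u:=\eta_1-\eta_2$ is forced into $|u|\lesssim 2^{-(b-a)j}$. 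The remaining $t,s$-integrations factor into $m_j(\cdot)\overline{m_j(\cdot)}$. Van der Corput applied to the phase of $m_j$---whose second $t$-derivative has size $\simeq 2^m$ on the support of $\rho$, using $a\neq b$ and the negligibility of $\epsilon_P,\epsilon_Q$ for $j>N$---yields $|m_j|\lesssim 2^{-m/2}$. Cauchy--Schwarz on the remaining Fourier integrals together with the measure of $u$ then give
\[
\|B_{j,m}(f,g)\|_2\lesssim 2^{-(b-a)j-m/2}\|f\|_2\|g\|_2.
\]

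For the passage to $L^1$, the key observation is that $\widehat{B_{j,m}(f,g)}$ is supported in a set of length $\simeq 1$ (inherited from $\hat F,\hat G$), so the output is band-limited at unit scale. Tracking where the product $F(x/2^{(b-a)j}-\alpha_t)G(x-\beta_t)$ can be simultaneously non-negligible given the translations $|\alpha_t|,|\beta_t|\simeq 2^m$ for $|t|\simeq 1$ and the internal $2^{(b-a)j}$-rescaling inside the first factor, performing a dyadic spatial decomposition of $f,g$ and summing contributions almost-orthogonally, one obtains an effective-support estimate $|\mathrm{supp}_{\mathrm{eff}}|\lesssim 2^{9(b-a)j/4+3m/4}$. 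A final Cauchy--Schwarz in $x$ delivers
\[
\|B_{j,m}(f,g)\|_1\le|\mathrm{supp}_{\mathrm{eff}}|^{1/2}\|B_{j,m}(f,g)\|_2\lesssim 2^{((b-a)j-m)/8}\|f\|_2\|g\|_2.
\]

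The main obstacle is this $L^2$-to-$L^1$ passage: for generic $L^2$ inputs, $B_{j,m}(f,g)$ has no intrinsic compact support, so the effective-support bound must be made precise through a dyadic decomposition of the inputs combined with an orthogonality argument summing the pieces. The dyadic bookkeeping---ensuring that the $\alpha_t,\beta_t$ translations at scale $2^m$ permit constructive interference between only a controlled number of spatial pieces of $f$ and $g$---is the principal technical step.
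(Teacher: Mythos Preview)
Your argument has a genuine gap in both of its two main steps.

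\textbf{The $L^2$ bound is too strong.} With only the van der Corput bound $|I_{\rho,m}|\lesssim 2^{-m/2}$ on each copy of the symbol, the $TT^*$ expansion gives (after resolving the delta constraint and parametrizing by $\tau=\xi_1-\xi_2$)
\[
\|B_{j,m}(f,g)\|_2^2 \;\lesssim\; 2^{-(b-a)j-m}\int_{|\tau|\lesssim 1}\|F_\tau\|_1\|G_\tau\|_1\,d\tau \;\lesssim\; 2^{-(b-a)j-m}\|f\|_2^2\|g\|_2^2,
\]
not $2^{-2(b-a)j-m}$. Your extra factor $2^{-(b-a)j}$ appears to come from double-counting the ``measure of $u$'': once you change variables from $u=\eta_1-\eta_2$ back to $\tau$, the Jacobian $2^{(b-a)j}$ exactly cancels the smallness of the $u$-range.

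\textbf{The effective-support bound is unjustified.} The natural spatial localization for $B_{j,m}$ is to intervals of length $\simeq 2^{(b-a)j+m}$ (this is what the paper uses, via almost-orthogonality after partitioning $x$). Combined with the \emph{correct} trivial $L^2$ bound above, Cauchy--Schwarz gives only $\|B_{j,m}\|_1\lesssim\|f\|_2\|g\|_2$ with no decay in $m$. Your claimed support size $2^{9(b-a)j/4+3m/4}$ has no clear provenance; for $m$ large relative to $(b-a)j$ it is \emph{smaller} than the natural scale, and band-limitedness of $f*\Phi,g*\Phi$ does not by itself force such localization.

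The missing idea is that the magnitude bound on $I_{\rho,m}$ is not enough: one must keep the oscillatory factor $e^{i2^m\phi(\xi,\eta)}$ from stationary phase and, in the $TT^*$ expression, exploit the \emph{difference} phase $Q_\tau(\xi,\eta)=\phi(\xi,\eta)-\phi(\xi-\tau,\eta+\tau 2^{-(b-a)j})$. A lower bound $|\partial_\xi\partial_\eta Q_\tau|\gtrsim|\tau|$ together with H\"ormander's oscillatory integral estimate yields an additional decay $\min\{1,(2^m|\tau|)^{-1/2}\}$ in the $\tau$-integrand; optimizing in $\tau$ produces the extra factor $2^{((b-a)j-m)/3}$ in $\|B_{j,m}\|_2^2$ that survives the passage to $L^1$. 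Without this step the argument cannot close.
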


\begin{proposition} \label{prop 2}
There exists a positive $\delta$ such that
$\|B_{j,m}(f,g)\|_1\lesssim 2^{-\epsilon m}\|f\|_2\|g\|_2$ whenever $(b-a)j>(1-\delta)m$.
\end{proposition}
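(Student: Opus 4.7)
The plan is to apply the $TT^*$ method. By duality $\|B_{j,m}(f,g)\|_1 = \sup_{\|h\|_\infty \le 1}|\Lambda(f,g,h)|$ where $\Lambda(f,g,h) := \int h(x) B_{j,m}(f,g)(x)\,dx$. Viewing $\Lambda$ as a linear functional in $g$ with $(f,h)$ frozen, Cauchy--Schwarz gives $|\Lambda| \le \|g\|_2 \|R_{f,h}\|_2$ where
\[ R_{f,h}(y) = 2^{-(b-a)j/2}\iint h(x)\,(f*\Phi)(x/2^{(b-a)j} - 2^m A(t))\,\Phi(x - 2^m B(t) - y)\,\rho(t)\, dt\, dx \]
with $A(t) := t^a + \epsilon_P(t)$ and $B(t) := t^b + \epsilon_Q(t)$. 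It thus suffices to show $\|R_{f,h}\|_2 \lesssim 2^{-\epsilon m}\|f\|_2\|h\|_\infty$.

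Squaring $R_{f,h}$ and integrating out $y$ via Plancherel produces a Schwartz kernel $\Psi$ that localizes $x - x'$ near $2^m(B(t) - B(t'))$. After rescaling $y := x/2^{(b-a)j}$ and applying Plancherel in the variable $\xi$ dual to $f*\Phi$, the decisive oscillatory quantity in the model case $h \equiv 1$ becomes
\[ J(\xi) := \int \rho(t)\, e^{-2\pi i \xi \cdot 2^{m-(b-a)j}\, \phi(t)}\, dt, \qquad \phi(t) := B(t) - 2^{(b-a)j} A(t), \]
arising because the $(t,t')$-cross term in the phase separates cleanly as $\phi(t) - \phi(t')$.

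Since $j > N$ and $|t|\simeq 1$, the derivative $\phi'(t) = bt^{b-1} + \epsilon_Q'(t) - 2^{(b-a)j}(at^{a-1}+\epsilon_P'(t))$ is dominated by the term $-2^{(b-a)j}\,a\, t^{a-1}$, hence $|\phi'(t)|\simeq 2^{(b-a)j}$. On the support $|\xi|\simeq 1$ of $\hat\Phi$ the effective phase frequency inside $J(\xi)$ is $|\xi|\cdot 2^{m-(b-a)j}\cdot 2^{(b-a)j} = |\xi|\cdot 2^m$, so van der Corput's first-derivative test yields $|J(\xi)|\lesssim 2^{-m}$. Feeding this back through Plancherel gives $\|R_{f,1}\|_2 \lesssim 2^{-m}\|f\|_2$.

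The main obstacle is extending this model computation to general $h\in L^\infty$. Multiplication by $h$ does not commute with Plancherel and introduces in the $TT^*$ expansion a twisted factor $h(x)\bar{h}(x - 2^m(B(t)-B(t'))-v)$; after rescaling, its shift becomes $2^{m-(b-a)j}(B(t)-B(t')) + O(2^{-(b-a)j})$, which by the hypothesis $(b-a)j>(1-\delta)m$ is bounded by $2^{\delta m}$, much smaller than the $2^m$ oscillation driving the van der Corput gain on $J$. I plan to manage this either by a Littlewood--Paley decomposition of $h$ or by treating $h$ as a Fourier multiplier acting on the frequency-localized $f*\Phi$; either route loses at most a factor of $2^{O(\delta m)}$, so choosing $\delta$ small enough relative to the van der Corput gain preserves net decay $2^{-\epsilon m}$, which is the conclusion sought.
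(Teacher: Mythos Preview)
Your approach departs from the paper's in a significant way: the paper proves Proposition~\ref{prop 2} via the $\sigma$-uniformity method (Theorem~\ref{uniformity}), not by a direct $TT^*$ argument of the kind you outline. The point of the $\sigma$-uniformity machinery is precisely that the direct $TT^*$ route---which is what the paper uses for Proposition~\ref{prop 1}---only yields $2^{((b-a)j-m)/8}$ and therefore gives no decay once $(b-a)j\gtrsim m$.

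There is a genuine gap in your reduction from $h\equiv 1$ to general $h\in L^\infty$, and it is not a matter of losing $2^{O(\delta m)}$. Trace through what makes the model case work. After Fourier-expanding both $F=f*\Phi$ (frequency $\xi$, $|\xi|\simeq 1$) and the kernel $\Psi$ (frequency $\eta$), the $t$-phase is $2^m(\xi A(t)+\eta B(t))$ up to signs. When $h\equiv 1$, the $x$-integration produces a delta constraint $\eta=-\xi/2^{(b-a)j}$, so $|\eta|\simeq 2^{-(b-a)j}\ll 1$; then $\xi A'(t)$ dominates $\eta B'(t)$, the phase has no critical point, and first-derivative van der Corput gives the full $2^{-m}$ in your $J(\xi)$. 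For general $h$, however---even after the free frequency localization of $h$ to $|\cdot|\simeq 1$ coming from the output frequency of $B_{j,m}$---the $x$-integral yields $\hat h(-\xi/2^{(b-a)j}-\eta)$, which forces $|\eta|\simeq 1$. Now $\xi A'(t)+\eta B'(t)$ genuinely vanishes for some $t$ in the support of $\rho$, so you are back to stationary phase and only $2^{-m/2}$ per $t$-integral; combined with the remaining integrations this reproduces the Proposition~\ref{prop 1} bound, which is useless in the regime $(b-a)j>(1-\delta)m$. Your heuristic that ``the rescaled shift is only $2^{\delta m}$'' overlooks that after rescaling $h$ becomes $h(2^{(b-a)j}\,\cdot\,)$, so its $u$-derivative is of size $2^{(b-a)j}$: over a shift of length $2^{\delta m}$ this produces variation of order $2^{(b-a)j+\delta m}\simeq 2^{m}$, exactly matching the main phase.

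The paper's fix is structural rather than perturbative. One applies the $\sigma$-uniformity dichotomy to $\hat f$: if $\hat f$ is $\sigma$-uniform with respect to the phase family $\mathcal Q$ generated by the stationary-phase expansion of $I_{\rho,m}$, a direct estimate on $\Gamma_k$ yields the bound $U_\sigma\lesssim\sigma\cdot 2^{\max(0,(m-(b-a)j)/2)}$; if instead $\hat f=e^{iq}$ is a pure phase in $\mathcal Q$, one gains an explicit algebraic structure that allows a \emph{local} $TT^*$ (in $g$, after a further change of variables and stationary phase) to succeed and produce $Q_0\lesssim 2^{-\epsilon m}$. Balancing $U_\sigma$ against $Q_0/\sigma$ then gives the proposition. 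The essential point is that replacing a generic $f$ by $e^{iq}$ removes exactly the degree of freedom that creates the critical-point obstruction you are encountering.
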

Proposition \ref{prop 1} is efficient when $m$ is large and Proposition \ref{prop 2} is useful for small $m$. The proofs for the above two propositions require different methods.

We prove Proposition \ref{prop 1} in this section, using a $TT^*$ method. More precisely, we aim to obtain a $L^2\times L^2 \to L^2$ bound with good decay. By making suitable partitions in time spaces, we see that $x$ can be assumed to be supported in an interval of length $\simeq 2^{(b-a)j+m}$. This observation indicates that it suffices to prove 
\begin{equation} \label{goal for TT*}
\|B_{j,m}(f,g)\|_2\lesssim 2^{\frac{(b-a)j-m}{6}} 2^{-\frac{(b-a)j+m}{2}}\|f\|_2\|g\|_2.
\end{equation}
Rewrite $B_{j,m}$ as
\begin{equation}
B_{j,m}(f,g)(x)=2^{-\frac{(b-a)j}{2}}\iint \hat{f}(\xi)\hat{g}(\eta)e^{2\pi i\left(\frac{\xi}{2^{(b-a)j}}+\eta\right)x} I_{\rho,m}\hat{\Phi}(\xi)\hat{\Phi}(\eta)\,d\xi\,d\eta,
\end{equation}
where 
\begin{equation} \label{I}
I_{\rho,m}:=\int \rho(t)e^{-2\pi i 2^m(\xi(t^a+\epsilon_P(t))+\eta(t^b+\epsilon_Q(t)))}\,dt
\end{equation}
Let $\varphi(t):=\xi(t^a+\epsilon_P(t))+\eta(t^b+\epsilon_Q(t))$ and $t_0$ be a solution of $\varphi'(t)=0$. Let $\phi(\xi,\eta):=\varphi(t_0)$. By stationary phase method, 
\begin{equation}
I_{\rho,m}(\xi,\eta)\hat{\Phi}(\xi)\hat{\Phi}(\eta)\sim 2^{-\frac{m}{2}}e^{i 2^m\phi(\xi,\eta)}.
\end{equation} 
Thus we can regard $B_{j,m}$ as 
\begin{equation}
B_{j,m}(f,g)(x)\sim 2^{-\frac{(b-a)j}{2}}2^{-\frac{m}{2}}\iint \hat{f}(\xi)\hat{g}(\eta)e^{2\pi i(\frac{\xi}{2^{(b-a)j}})x}e^{i 2^m\phi (\xi,\eta)}\,d\xi d\eta.
\end{equation}
Then
\[
\begin{split}
\|B_{j,m}\|_2^2=&\int B_{j,m}(x)\overline{B_{j,m}(x)}\,dx \\
=&2^{-(b-a)j-m}\iiiint\limits_{\frac{\xi}{2^{(b-a)j}}+\eta=\frac{\xi_1}{2^{(b-a)j}}+\eta_1} \hat{f}(\xi)\hat{\Phi}(\xi)\overline{\hat{f}(\xi_1)\hat{\Phi}(\xi_1)}\\
&\qquad\qquad\qquad\qquad \hat{g}(\eta)\hat{\Phi}(\eta)\overline{\hat{g}(\eta_1)\hat{\Phi}(\eta_1)}e^{i2^m[\phi(\xi,\eta)-\phi(\xi_1,\eta_1)]}d\xi d\eta d\xi_1 d\eta_1\\
=&2^{-(b-a)j-m}\iiint F_{\tau}(\xi)G_{\tau}(\eta)e^{i 2^m Q_{\tau}(\xi,\eta)}\,d\xi d\eta d\tau,
\end{split}
\]
where
\[
\begin{split}
&F_{\tau}(\xi):=\hat{f}(\xi)\hat{\Phi}(\xi)\overline{\hat{f}(\xi-\tau)\hat{\Phi}(\xi-\tau)}\\
&G_{\tau}(\eta):=\hat{g}(\eta)\hat{\Phi}(\eta)\overline{\hat{g}\left(\eta+\frac{\tau}{2^{(b-a)j}}\right)\hat{\Phi}\left(\eta+\frac{\tau}{2^{(b-a)j}}\right)} \\
&Q_{\tau}(\xi,\eta):=\phi(\xi,\eta)-\phi\left(\xi-\tau,\eta+\frac{\tau}{2^{(b-a)j}}\right).
\end{split}
\]

We claim that whenever $\xi,\eta,\xi-\tau,\eta+\frac{\tau}{2^{(b-a)j}}\in $ supp$\hat{\Phi}$, we have
\begin{equation} \label{claim}
|\partial_{\xi}\partial_{\eta} Q_{\tau}(\xi,\eta)|\gtrsim |\tau|
\end{equation}

Let's briefly justify \eqref{claim}. By the definition of $Q_{\tau}$ and mean value theorem, we need to show that $|\partial_{\xi}^2\partial_{\eta}\phi(\xi,\eta)|$ and $|\partial_{\xi}\partial_{\eta}^2\phi(\xi,\eta)|$ are bounded below by some positive $C$. Let $t_0$ be a root of $F_0(t):=\varphi'(t)$=$D_t(\xi(t^a+\epsilon_P(t))+\eta(t^b+\epsilon_Q(t)))$, and $t_1$ be a root of $F_1(t):=D_t(\xi t^a+\eta t^b)$. Let  $\phi^*(\xi,\eta):=\xi t_1^a+\eta t_1^b=C\left(\frac{\xi^b}{\eta^a}\right)^{\frac{1}{b-a}}$. Then
$$
\phi(\xi,\eta)=\varphi(t_0)=\phi^*(\xi,\eta)+Err(\xi,\eta),
$$
where $Err(\xi,\eta):=\xi(t_0^a-t_1^a)+\eta(t_0^b-t_1^b)+\xi \epsilon_P(t_0)+\eta \epsilon_Q(t_0)$. Clearly the mixed derivatives of $\phi^*(\xi,\eta)$ are bounded below by some positive $C$. It remains to show that $|Err(\xi,\eta)|\le C^{-1}$ for some large $C$. Since $F_0$ and $F_1$ are ``close'', the difference of their inverses $t_0-t_1$ (and its derivatives) is also very small (see Definition A.1 and Lemma A.2 in \cite{LX} for details). By this observation and the facts that $|\epsilon_P(t_0)|$ and $|\epsilon_Q(t_0)|$ are tiny when $N$ is large enough, we conclude that $|Err(\xi,\eta)|$ is very small compared with $1$. This finishes the justification of \eqref{claim}.

By \eqref{claim} and H\"omander principle (Theorem 1.1 in \cite{H}),
\begin{equation}
\iint F_{\tau}(\xi)G_{\tau}(\eta)e^{i 2^m Q_{\tau}}\,d\xi d\eta \lesssim \min\{\|f\|_2^2\|g\|_2^2, 2^{-\frac{m}{2}}|\tau|^{-\frac{1}{2}}\|F_{\tau}\|_2\|G_{\tau}\|_2 \}.
\end{equation}
Therefore,
\[
\begin{split}
\|B_{j,m}\|_2^2&\lesssim 2^{-(b-a)j-m}\int \min\{\|f\|_2^2\|g\|_2^2, 2^{-\frac{m}{2}}|\tau|^{-\frac{1}{2}}\|F_{\tau}\|_2\|G_{\tau}\|_2 \}\,d\tau \\
&\lesssim 2^{-(b-a)j-m}\left[\int_{|\tau|<\tau_0}\|f\|_2^2\|g\|_2^2\, d\tau +\int_{\tau_0 \le |\tau|\lesssim 1}2^{-\frac{m}{2}}|\tau|^{-\frac{1}{2}}\|F_{\tau}\|_2\|G_{\tau}\|_2\right] \\
&\lesssim 2^{-(b-a)j-m}\left[\tau_0 \|f\|_2^2\|g\|_2^2 +2^{-\frac{m}{2}}|\tau_0|^{-\frac{1}{2}}\|F_{\tau}\|_2\|G_{\tau}\|_2\right]\\
&\lesssim 2^{-(b-a)j-m}2^{\frac{(b-a)j-m}{3}}\|f\|_2^2\|g\|_2^2,
\end{split}
\]
from which \eqref{goal for TT*} follows. 

\section{$\sigma$-uniformity method} \label{section: uniformity}
\setcounter{equation}0
We prove Proposition \ref{prop 2} and hence finish the proof of Proposition \ref{prop 221} in this section. Let $I\subseteq \ZR$ be a fixed interval. Let $\sigma\in (0,1]$ and $\cq$ be a collection of real-valued functions.
\begin{definition*}
A function $f\in L^2(I)$ is called $\sigma$-uniform in $\cq$ if $$\left|\int_I f(\xi)e^{-i q(\xi)}\,d\xi\right|\le \sigma \|f\|_{L^2(I)}$$
for all $q\in\cq$.
\end{definition*}
The main tool of proving Proposition \ref{prop 2} is the following theorem, whose proof can be found in Theorem 6.2 in \cite{LAP}.
\begin{theorem} \label{uniformity}
Let $L$ be a bounded sub-linear functional from $L^2(I)$ to $\ZC$. Let $S_{\sigma}$ be the set of all $L^2$ functions that are $\sigma$-uniform in $\cq$ and $U_{\sigma}:=\sup\limits_{f\in S_{\sigma}} \frac{|L(f)|}{\|f\|_{L^2(I)}}$. Then for all functions $f\in L^2(I)$,
\begin{equation}
|L(f)|\lesssim \max\left\{U_{\sigma},\frac{Q_0}{\sigma} \right\}\|f\|_{L^2(I)},
\end{equation}
where $Q_0:=\sup\limits_{q\in\cq}L(e^{iq})$.
\end{theorem}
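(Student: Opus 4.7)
The plan is to run a greedy orthogonal decomposition of $f$ against exponentials $e^{iq}$, $q\in\cq$, stopping either when the residual becomes $\sigma$-uniform or after enough steps for its $L^2$-norm to shrink. Since $L$ is assumed bounded on $L^2(I)$, the operator norm $A := \sup_{h\neq 0}|L(h)|/\|h\|_{L^2(I)}$ is finite, and the task becomes to show $A \lesssim \max\{U_\sigma,\, Q_0/\sigma\}$ (with implicit constant depending on $|I|$). I would first fix a near-extremal $f$, say $|L(f)| \ge \tfrac{3}{4} A\|f\|_{L^2(I)}$.

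Next I would iterate the following greedy step starting from $f_0 := f$. At stage $k\ge 1$, if $f_{k-1}$ is $\sigma$-uniform in $\cq$ I halt; otherwise the failure of $\sigma$-uniformity produces some $q_k \in \cq$ with $|\langle f_{k-1},\, e^{iq_k}\rangle_{L^2(I)}| > \sigma\|f_{k-1}\|_{L^2(I)}$, and I would set
\[
\alpha_k := |I|^{-1}\langle f_{k-1},\, e^{iq_k}\rangle_{L^2(I)}, \qquad f_k := f_{k-1} - \alpha_k e^{iq_k},
\]
so that $f_k \perp e^{iq_k}$ in $L^2(I)$. Pythagoras combined with the non-uniformity inequality yields the geometric decay $\|f_k\|_{L^2(I)}^2 \le (1 - \sigma^2/|I|)\|f_{k-1}\|_{L^2(I)}^2$. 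I would run this for $K \sim |I|/\sigma^2$ iterations, chosen so that in the non-halting branch $\|f_K\|_{L^2(I)} \le \tfrac{1}{4}\|f\|_{L^2(I)}$. Telescoping Pythagoras yields the Bessel-type inequality $|I|\sum_{k=1}^K |\alpha_k|^2 \le \|f\|_{L^2(I)}^2$, and Cauchy--Schwarz converts this into
\[
\sum_{k=1}^K |\alpha_k| \le \sqrt{K}\,\Bigl(\sum_{k=1}^K |\alpha_k|^2\Bigr)^{1/2} \le \sqrt{K/|I|}\,\|f\|_{L^2(I)} \lesssim \sigma^{-1}\|f\|_{L^2(I)}.
\]

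Finally I would apply the sub-linearity of $L$ to the expansion $f = \sum_{k=1}^K \alpha_k e^{iq_k} + f_K$:
\[
|L(f)| \le \sum_{k=1}^K |\alpha_k|\,|L(e^{iq_k})| + |L(f_K)| \le Q_0 \sum_{k=1}^K |\alpha_k| + |L(f_K)| \lesssim \frac{Q_0}{\sigma}\|f\|_{L^2(I)} + |L(f_K)|.
\]
In the halt-by-uniformity branch, $|L(f_K)| \le U_\sigma\|f_K\|_{L^2(I)} \le U_\sigma\|f\|_{L^2(I)}$ closes the estimate immediately. In the non-halting branch the crude bound $|L(f_K)| \le A\|f_K\|_{L^2(I)} \le \tfrac{1}{4}A\|f\|_{L^2(I)}$, combined with the near-extremality of $f$, gives $\tfrac{3}{4}A \le CQ_0/\sigma + \tfrac{1}{4}A$, and the $\tfrac{1}{4}A$ is absorbed into the left side to yield $A \lesssim Q_0/\sigma$. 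The main point requiring care is the calibration of the iteration length: fewer steps leave $\|f_K\|$ too close to $\|f\|$ for the absorption to succeed, while more steps inflate the $\sqrt{K/|I|}$ loss in Cauchy--Schwarz past $1/\sigma$, so the balance $K \sim |I|/\sigma^2$ is essentially forced on us, and it is exactly what converts the qualitative dichotomy (uniform versus correlated with some $e^{iq}$) into the quantitative bound $\max\{U_\sigma,\, Q_0/\sigma\}$.
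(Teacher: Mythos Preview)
Your greedy energy-decrement argument is correct and is exactly the standard proof of this lemma; the paper itself does not prove it but defers to Theorem~6.2 of \cite{LAP}, where the same iterative orthogonal-projection scheme is carried out. One small aside: your implicit constant is in fact independent of $|I|$, since the calibration $K\sim |I|/\sigma^2$ gives $\sqrt{K/|I|}\sim 1/\sigma$ with an absolute constant.
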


Now we start to estimate $U_{\sigma}$. Recall that we can assume $x$ is restricted in an interval of length $\simeq 2^{(b-a)j+m}$. We fix such an interval and partition it into $2^m$ intervals of length $\simeq 2^{(b-a)j}$, which are denoted by 
$$
I_k=[\alpha_k-2^{(b-a)j}, \alpha_k+2^{(b-a)j}], k=1,2,\dots, 2^m.
$$
To each $I_k$ we assign an enlarged interval
$$
I_k'=[\alpha_k-C(2^{(b-a)j}+2^m), \alpha_k+C(2^{(b-a)j}+2^m)]
$$
such that $x-2^m(t^b+\epsilon_Q(t))\in I_k'$ whenever $x\in I_k$ and $t\in$ supp$\rho$. So $B_{j,m}$ can be partitioned accordingly as
\[
\begin{split}
B_{j,m}(f,g)(x)=2^{-\frac{(b-a)j}{2}}\sum_{k=1}^{2^m}\hichi_{I_k}(x)&\int f*\Phi\left(\frac{x}{2^{(b-a)j}}-2^m(t^a+\epsilon_P(t))\right)\\
&\hichi_{I_k'} g*\Phi(x-2^m(t^b+\epsilon_Q(t))) \rho(t)\, dt\\
=2^{-\frac{(b-a)j}{2}}\sum_{k=1}^{2^m}\hichi_{I_k}(x)&\iint \hat{f}(\xi)\hat{\Phi}(\xi)e^{2\pi i(\frac{\xi}{2^{(b-a)j}}+\eta)x}\hat{g_k}(\eta)I_{\rho, m}(\xi,\eta)\,d\xi d\eta,
\end{split}
\]
where
$$
g_k(x):=\hichi_{I_k'}g*\Phi(x),
$$
and $I_{\rho,m}$ is defined as \eqref{I}. Since $|\xi| \simeq 1$, $I_{\rho,m}$ has rapid decay unless $|\eta|\simeq 1$. So we may insert a cut-off function $\hat{\Phi}(\eta)$ for free in the above integrand. 

Pair $B_{j,m}$ with an $h\in L^{\infty}$,
\[
\begin{split}
\langle B_{j,m}(f,g),h \rangle=2^{-\frac{(b-a)j}{2}}\sum_{k=1}^{2^m}&\int \hichi_{I_k}(x)h(x)e^{2\pi i\eta x} \\
& \iint \hat{f}(\xi)\hat{\Phi}(\xi)e^{2\pi i\frac{\xi}{2^{(b-a)j}}x}\hat{\Phi}(\eta)\hat{g_k}(\eta)I_{\rho, m}(\xi,\eta)\,d\xi d\eta.
\end{split}
\]
Thanks to the cut-off $\hichi_{I_k}(x)$, and we can replace $e^{2\pi i\frac{\xi}{2^{(b-a)j}}x}$ with $e^{2\pi i\frac{\xi}{2^{(b-a)j}}\alpha_k}$ using Taylor expansion. Thus essentially,
\[
\langle B_{j,m},h\rangle\sim 2^{-\frac{(b-a)j}{2}}\sum_{k=1}^{2^m}\int \check{h_k}(\eta)\Gamma_k(\eta)\hat{g_k}(\eta)\,d\eta,
\]
where
\[
\begin{split}
&h_k(x):=\hichi_{I_k}h(x), \text{ and}\\
&\Gamma_k(\eta):=\hat{\Phi}(\eta)\int \hat{f}(\xi)\hat{\Phi}(\xi)e^{2\pi i\frac{\xi}{2^{(b-a)j}}\alpha_k} I_{\rho,m}(\xi,\eta)d\xi.
\end{split}
\]
As before, we can replace $I_{\rho,m}$ with $2^{-\frac{m}{2}}e^{i 2^m \phi(\xi,\eta)}$ and thus
$$
\Gamma_k(\eta)\sim 2^{-\frac{m}{2}}\hat{\Phi}(\eta)\int \hat{f}(\xi)\hat{\Phi}(\xi)e^{i(2^m\phi(\xi,\eta)+\frac{\xi}{2^{(b-a)j}}\alpha_k)}d\xi.
$$
Let $\cq:=\{A(\xi^{\frac{b}{b-a}}+\epsilon(\xi))+B\xi\}$, where $A,B\in\ZR$, $|A|\simeq a^m$, and $\epsilon(\xi)$ and its derivatives are $\lesssim 2^{-CN}$. Then $2^m\phi(\xi,\eta)+\frac{\xi}{2^{(b-a)j}}\alpha_k \in\cq$ for large $N$. Let $\hat{f}$ be $\sigma$-uniform in $\cq$. Then 
$$
\|\Gamma_k\|_{\nf}\lesssim 2^{-\frac{m}{2}}\sigma\|f\|_2,
$$
and thus
\begin{equation}\label{U sigma} 
\begin{split}
\langle B_{j,m}(f,g),h\rangle &\lesssim 2^{-\frac{(b-a)j}{2} }\sum_{k=1}^{2^m}\|\Gamma_k\|_{\nf}\|h_2\|_2\|g_k\|_2 \\
&\lesssim \sigma\|f\|_2\|h\|_{\nf}(\sum_{k}\|g_k\|_2^2)^{\frac{1}{2}}\\
&\lesssim \begin{cases}
\sigma\|f\|_2\|g\|_2\|h\|_{\nf}  \text{ when } (b-a)j\ge m \\
\sigma 2^{\frac{m-(b-a)j}{2}}\|f\|_2\|g\|_2\|h\|_{\nf} \text{ when } (b-a)j\le m. \\
\end{cases}
\end{split}
\end{equation}
This finishes the computation of $U_{\sigma}$.

Now we turn to $Q_0$. Let $\hat{f}(\xi)=e^{i q(\xi)}$ for some $q\in\cq$. Let $h\in L^{\nf}$ be a function supported on an interval of length $\simeq 2^{(b-a)j+m}$ as before. Define
\[
\begin{split}
\Lambda_q(g,h):=\langle B_{j,m},h\rangle&\\
=2^{-\frac{(b-a)j}{2} }&\iiint\hat{\Phi}(\xi)e^{i(A(\xi^{\frac{b}{b-a}}+\epsilon(\xi))+B\xi)} e^{i\xi \left(\frac{x}{2^{(b-a)j}}-2^m(t^a+\epsilon_P(t))\right)}\,d\xi\\
&\qquad g*\Phi(x-2^m(t^b+\epsilon_Q(t))) \rho(t)\, dtdx.
\end{split}
\]
Our goal is to show
\begin{equation} \label{goal for lambda}
|\Lambda_q(g,h)|\lesssim 2^{-\epsilon m}\|g\|_2\|h\|_{\nf}.
\end{equation}
This means that $Q_0\lesssim 2^{-\epsilon m}$. Combining this with \eqref{U sigma}, Proposition \ref{prop 2} will be proved by Theorem \ref{uniformity}.

To prove \eqref{goal for lambda}, we will use the strategy similar to the previous cases: rescaling, stationary phase, and (local) TT*. Let
$$
|\tilde{\Lambda_q}(g,h)|:=\iint \cp(y,t)g*\tilde{\Phi}\left(y-\frac{t^b+\epsilon_Q(t)}{2^{(b-a)j}}\right)\rho(t)dt h(y)dy,
$$
where $\hat{\tilde{\Phi}}(\xi):=\hat{\Phi}\left(\frac{\xi}{2^{(b-a)j+m}}\right)$ and
$$
\cp(y,t):=2^{\frac{m}{2}}\int \hat{\Phi}(\xi)e^{iA\left(\xi^{\frac{b}{b-a}}+\epsilon(\xi) +\frac{2^m}{A}(y-(t^a+\epsilon_P(t))+\frac{B}{2^m})\xi \right)}\,d\xi.
$$
By rescaling, \eqref{goal for lambda} follows from the estimate
\begin{equation}
|\tilde{\Lambda_q}(g,h)|\lesssim 2^{-\epsilon m}\|g\|_2\|h\|_{\nf} 
\end{equation}
for any $h\in L^{\nf}$ supported in an interval of length $\simeq 1$.

Write 
$$
\cp(y,t)=2^{\frac{m}{2}}\int \hat{\Phi}(\xi)e^{iA\left(\xi^{\frac{b}{b-a}}+\epsilon(\xi) +C'(y-(t^a+\epsilon_P(t))+B')\xi \right)}\,d\xi,
$$
where $C':=\frac{2^m}{A}\simeq 1$ and $B':=\frac{B}{2^m}$.
For simplicity we drop $C'$ from now on. Let $\zeta(z)$ be the solution of $(\xi^{\frac{b}{b-a}}+\epsilon(\xi)+z\xi )'=0$ and $\beta(z):=\zeta(z)^{\frac{b}{b-a}}+\epsilon(\zeta(z))+z\zeta(z)$. Then stationary phase methods gives that 
$$
\cp(y,t)\sim e^{iA\beta(y-(t^a+\epsilon_P(t))+B')}\hat{\Phi}(\zeta(y-(t^a+\epsilon_P(t))+B')).
$$
Since the term $\hat{\Phi}(\zeta(y-(t^a+\epsilon_P(t))+B'))$ can be dropped by Fourier expansion, we have
$$
\tilde{\Lambda_q}(g,h)\sim \iint e^{iA\beta(y-(t^a+\epsilon_P(t))+B')}g*\tilde{\Phi}\left(y-\frac{t^b+\epsilon_Q(t)}{2^{(b-a)j}}\right)\rho(t)dt h(y)dy.
$$
This finishes the use of the stationary phase method. 
The last step is to use TT* method to obtain the decay. Change variable $s=t^b+\epsilon_Q(t)$. Define three new functions $\kappa$, $l$ and $\tilde{\rho}$ by $t=\kappa(s)$, $l(s)=\kappa(s)^a+\epsilon_P(\kappa(s))$ and $\tilde{\rho}(s)ds=\rho(t)dt$. Then
\[
\begin{split}
\tilde{\Lambda_q}(g,h)&=\iint e^{iA\beta(y-l(s)+B')}g*\tilde{\Phi}\left(y-\frac{s}{2^{(b-a)j}}\right)\tilde{\rho}(s)ds\,h(y)dy\\
&\lesssim \|\Delta(h)\|_2\|g\|_2,
\end{split}
\]
where
$$
\Delta(h)(y):=\int e^{iA\beta(y+\frac{s}{2^{(b-a)j}}-l(s)+B')}h\left(y+\frac{s}{2^{(b-a)j}}\right) \tilde{\rho}(s)\,ds.
$$
It remains to show
\begin{equation} \label{goal for delta}
\|\Delta(h)\|_2^2\lesssim 2^{-\epsilon m}\|h\|_{\nf}^2.
\end{equation}
A straightforward calculation gives
\begin{equation} \label{final express for delta}
\|\Delta(h)\|_2^2=\iiint e^{iAO_{\tau}(u,v)}H_{\tau}(u)\Theta_{\tau}(v)dudv\,d\tau,
\end{equation}
where
\[
\begin{split}
&H_{\tau}(u):=h(u)h\left(u+\frac{\tau}{2^{(b-a)j}}\right),\\
&\Theta_{\tau}(v):=\tilde{\rho}(v)\tilde{\rho}(v+\tau),\\
\end{split}
\]
and
$$
O_{\tau}(u,v):=\beta(u-l(v)+B')-\beta\left(u+\frac{\tau}{2^{(b-a)j}}-l(v+\tau)+B'\right).
$$
By the same idea in the proof of \eqref{claim}, we see that the mixed partial derivatives of  $O_{\tau}(u, v)$ is bounded below by $C|\tau|$.
By the operator version of van der Corput lemma (see for example Lemma 5.8 in \cite{LX}), we have
\begin{equation} \label{inner integral estimate}
\iint e^{iAO_{\tau}(u,v)}H_{\tau}(u)\Theta_{\tau}(v)dudv\lesssim \min\{1, |2^m\tau|^{-\epsilon }\}\|H_{\tau}\|_2\|\Theta_{\tau}\|_2.
\end{equation}
By definitions, it is easy to see that $\|H_{\tau}\|_2\lesssim \|h\|_{\nf}^2$ and $\|\Theta_{\tau}\|_2\lesssim 1$. So we can break the integral against $\tau$ in \eqref{final express for delta} into two parts as before: $|\tau|\le \tau_0$ and $\tau_0<|\tau|\lesssim 1$, and use the estimate \eqref{inner integral estimate} to obtain the desired result \eqref{goal for delta}.

\section{$L^r$ estimates and the maximal function} \label{section: Lr and maximal}
\setcounter{equation}0
We start to prove Proposition \ref{prop pqr} and thus finish the proof of Theorem \ref{thm}. Rewrite $T_{j,m}$ as 
\begin{equation}
\begin{split}
T_{j,m}&(f,g)(x)=\\
&\int f*\Phi_{aj+m}\left(x-\frac{t^a+\epsilon_P(t)}{2^{aj}}\right)g*\Phi_{bj+m}\left(x-\frac{t^b+\epsilon_Q(t)}{2^{bj}}\right)\rho(t)\,dt,
\end{split}
\end{equation}
where $\Phi_k(x):=2^k\Phi (2^kx)$. 
Let 
\begin{equation}
\begin{split}
&T^m(f,g)(x):= \\
&\sum_{j>N}\int\left|f*\Phi_{aj+m}\left(x-\frac{t^a+\epsilon_P(t)}{2^{aj}}\right)g*\Phi_{bj+m}\left(x-\frac{t^b+\epsilon_Q(t)}{2^{bj}}\right)\rho(t)\right| \,dt.
\end{split}
\end{equation}
It suffices to prove the boundedness of $T^m$ with norm $\lesssim m$. 

Given any measurable sets $F_1, F_2, F_3$ of finite measure, define $$\Omega:=\bigcup_{i=1}^2\left\{x: \fm \hichi_{F_i}>C\frac{|F_i|}{|F_3|}\right\},$$ where $\fm$ denotes the Hardy-Littlewood maximal operator. Let $F_3':=F_3\setminus \Omega$, which has measure no less than $\frac{|F_3|}{2}$ when $C$ is chosen large enough. By standard interpolation, we need to show that 
\begin{equation} \label{T^m}
|\langle T^m(f,g),h\rangle|\lesssim m|F_1|^{\frac{1}{p}}|F_2|^{\frac{1}{q}}|F_3|^{1-\frac{1}{r}},
\end{equation}
for all $|f|\le \hichi_{F_1}$, $|g|\le \hichi_{F_2}$, $p,q\in (1,\nf)$, $\frac{1}{r}=\frac{1}{p}+\frac{1}{q}$. 

We first remove some error terms related with $\Omega$, Define $\Omega_k:=\{x: \text{dist}(x,\Omega^c)\ge 2^{-k}\}$ and let $\psi_k(x)=\hichi_{\Omega_k^c}*\tilde{\psi_k}(x)$, where $\tilde{\psi}\in\cs(\ZR)$ is Fourier supported in $[-2^k, 2^k]$. It turns out that in proving \eqref{T^m} we can replace $T^m(f,g)$ with
\begin{equation}
\begin{split}
(T')^m(f,g(x):=\sum_{j>N}\int\left|\psi_{aj+m}f*\Phi_{aj+m}\left(x-\frac{t^a+\epsilon_P(t)}{2^{aj}}\right)\right| \\
\left|\psi_{bj+m}g*\Phi_{bj+m}\left(x-\frac{t^b+\epsilon_Q(t)}{2^{bj}}\right)\rho(t)\right| \,dt.
\end{split}
\end{equation}
This is because the difference of these two operators has good control. See Lemma 6.3 in \cite{LX}, whose proof is based on a discussion about whether $x-t$ (or $x-\frac{t^b+\epsilon_Q(t)}{2^{bj}}$) belongs to $\Omega$ or not. That proof can be easily modified to include the $x-\frac{t^a+\epsilon_P(t)}{2^{aj}}$ case. So we focus on proving the following variant of \eqref{T^m}, with $T^m$ being replaced by $(T')^m$:
\begin{equation} \label{T^m variant}
|\langle (T')^m(f,g),h\rangle|\lesssim m|F_1|^{\frac{1}{p}}|F_2|^{\frac{1}{q}}|F_3|^{1-\frac{1}{r}}.
\end{equation}

Time-frequency analysis must be employed to prove \eqref{T^m variant}. For any integers $n,j$, define $I_{n,j}:=[2^{-j}n, 2^{-j}(n+1))$. Let $1^*_{n,j}(x):=\hichi_{I_{n,j}}*\theta_{j+m}(x)$, where $\theta_k\in\cs(\ZR)$ is Fourier supported on $[-2^{-10}2^k, 2^{-10}2^k]$. Then
\begin{equation}
\begin{split}
(T')^m(f,g)(x)=\sum_{j>N}\int &\left|\sum_{n\in\ZZ}f_{n,m,j}\left(x-\frac{t^a+\epsilon_P(t)}{2^{aj}}\right)\right|\\
&\left|\sum_{n\in\ZZ}g_{n,m,j}\left(x-\frac{t^b+\epsilon_Q(t)}{2^{bj}}\right)\rho(t)\right|dt,
\end{split}
\end{equation}
where
\begin{align}
f_{n,m,j}(x):=1^*_{n,aj}\psi_{aj+m}f*\Phi_{aj+m}(x); \notag\\
g_{n,m,j}(x):=1^*_{n,aj}\psi_{bj+m}g*\Phi_{bj+m}(x). \notag
\end{align}
Let $S_0:=\{(j,n)\in\ZZ^2: j>N\}$. For any $S\subseteq S_0$, define $S_j:=\{n\in\ZZ: (j,n)\in S\}$ and
\begin{equation}
\begin{split}
\Lambda_S(f,g):=\sum_{j>N}\iint& \left|\sum_{n\in S_j}f_{n,m,j}\left(x-\frac{t^a+\epsilon_P(t)}{2^{aj}}\right)\right|\\
&\left|\sum_{n\in S_j}g_{n,m,j}\left(x-\frac{t^b+\epsilon_Q(t)}{2^{bj}}\right)\right||\rho(t)|\,dtdx
\end{split}
\end{equation}
We aim to prove that for any finite $S\subseteq S_0$,
\begin{equation}
\Lambda_S(f,g)\lesssim m|F_1|^{\frac{1}{p}}|F_2|^{\frac{1}{q}}|F_3|^{1-\frac{1}{r}}, 
\end{equation}
from which \eqref{T^m variant} follows. The strategy is to organize elements in $S$ into union of subsets called maximal trees. On each tree $\ct\in S$, $\Lambda_{\ct}(f,g)$ can be controlled. Let's perform some reductions on $\Lambda_{\ct}(f,g)$ as in \cite{LX}. By a change of variable $u=x-\frac{t^b+\epsilon_Q(t)}{2^{bj}}$,
\begin{equation}
\Lambda_{\ct}(f,g)=\sum_{j>N}\iint \left|\sum_{n\in \ct_j}f_{n,m,j}\left(u-tr(t)\right)\rho(t)\right|\,dt \left|\sum_{n\in \ct_j}g_{n,m,j}\left(u\right)\right|\,du,
\end{equation}
where $tr(t):=\frac{t^a+\epsilon_P(t)}{2^{aj}}-\frac{t^b+\epsilon_Q(t)}{2^{bj}}$. Since $tr(t)\simeq \frac{t^a}{2^{aj}}$, we have
\begin{equation}
\int\left|\sum_{n\in \ct_j}f_{n,m,j}\left(u-tr(t)\right)\rho(t)\right|\,dt\lesssim \fm \left(\sum_{n\in\ct_j}f_{n,m,j}\right)(u).
\end{equation}
From here the translation determined by $t$ disappears and thus we can use the same calculations as in \cite{LX}. We omit the details. This finishes the proof of Proposition \ref{prop pqr} and Theorem \ref{thm}.

Now we show how to use Lemma \ref{lemma on single scale} and Theorem \ref{thm T_jm} to obtain the boundedness of the bilinear maximal function $\cm_{P,Q}$, proving Theorem \ref{thm maximal}.
By triangle inequality, it suffices to consider the following operator
\begin{equation}
T^*(f,g)(x):=\sup_{j\in\ZZ}T_j(f,g)(x),
\end{equation}
where $T_j$ is defined as in \eqref{def of Tj} and $f, g$ are non-negative. 
By Lemma \ref{lemma on single scale} and symmetry, we can further assume that the supremum is taken over $j>N$ for some large $N$.

As before, decompose $T_j=\sum_{(m,n)\in \ZZ^2} T_{j,m,n}$ (see \eqref{def of T_jmn}). Let $E:=\{|m-n|\gtrsim 1\}\cup \{\max\{m,n\}\le 0\}$. Using Fourier expansion and integration by parts (or Taylor expansion), it is easy to see that
$$
\sup_{j>N}\left|\sum_{(m,n)\in E}T_{j,m,n}(f,g)(x)\right|\lesssim \fm f(x)\fm g(x).
$$
By H\"older inequity and the boundedness of $\fm$, $\sup_{j>N}|\sum_{(m,n)\in E}T_{j,m,n}(f,g)|$ is bounded from $L^p\times L^q$ into $L^r$. 

For $(m,n)\in\ZZ^2\setminus E$, we can assume without loss of generality that $m=n$. In this case we bound $\sup_{j>N}|T_{j,m,m}(f,g)(x)|$ crudely by $\sum_{j>N}|T_{j,m,m}(f,g)(x)|$. Since each $\sum_{j>N}|T_{j,m,m}|$ is bounded with $2^{-\epsilon
 m}$ decay in norm by Theorem \ref{thm T_jm}, we conclude that $\sup_{j>N}|\sum_{(m,n)\in \ZZ^2\setminus E}T_{j,m,n}|$ is bounded. This finishes the proof of Theorem \ref{thm maximal}.

\begin{acknowledgement}
The author would like to thank Prof. Xiaochun Li for helpful discussions on this topic. He also acknowledges the support from Gene H. Golub Fund of Mathematics Department at University of Illinois.
\end{acknowledgement}

%    Bibliographies can be prepared with BibTeX using amsplain,
%    amsalpha, or (for "historical" overviews) natbib style.
\bibliographystyle{amsplain}
%    Insert the bibliography data here.

\end{document}